\documentclass[a4paper]{amsart}
\usepackage{amsthm} 
\usepackage{amsmath}
\usepackage{amssymb}
\usepackage{amsfonts} 
\usepackage{latexsym}
\usepackage{mathrsfs} 
\usepackage[all]{xy} \SelectTips{eu}{} 
\usepackage{hyperref}

\theoremstyle{plain} 
\newtheorem{theorem}{Theorem}
\newtheorem*{theorem*}{Theorem}
\newtheorem{proposition}[theorem]{Proposition}
\newtheorem*{proposition*}{Proposition}
\newtheorem{corollary}[theorem]{Corollary}
\newtheorem*{corollary*}{Corollary}
\newtheorem{lemma}[theorem]{Lemma}

\numberwithin{theorem}{section}

\theoremstyle{definition}
\newtheorem{definition}[theorem]{Definition}
\newtheorem{remark}[theorem]{Remark}
\newtheorem{example}[theorem]{Example} 
\swapnumbers
\newtheorem{numpar1}[theorem]{$\kappa$-pure morphisms}
\newtheorem{numpar2}[theorem]{Complexes}
\newtheorem{numpar3}[theorem]{(Pre)covers}
\newtheorem{numpar4}[theorem]{Orthogonal classes and cotorsion pairs}
\newtheorem{numpar5}[theorem]{Kaplansky classes}
\newtheorem{numpar6}[theorem]{Kaplansky classes and filtrations}

\newcommand{\Coker}[1]{\nobreak{\operatorname{Coker}#1}}
\newcommand{\ZZ}{\mathbb{Z}}
\newcommand{\pd}[2][R]{\operatorname{pd}_{#1}#2}
\newcommand{\Spec}[1]{\operatorname{Spec}#1}
\newcommand{\setof}[3][\mspace{1mu}]{\{#1#2 \mid #3#1\}}
\newcommand{\Bo}[2][]{\operatorname{B}_{#1}(#2)}
\newcommand{\Cy}[2][]{\operatorname{Z}_{#1}(#2)}
\newcommand{\lra}{\longrightarrow}
\newcommand{\xra}[2][]{\xrightarrow[#1]{\;#2\;}}
\newcommand{\deq}{\:=\:}
\newcommand{\dis}{\:\is\:}
\newcommand{\is}{\cong}
\renewcommand{\a}{\alpha}
\newcommand{\g}{\gamma}
\newcommand{\f}{\varphi}
\newcommand{\s}{\sigma}
\newcommand{\mapdef}[4][\rightarrow]{\nobreak{#2\colon #3 #1 #4}}
\newcommand{\tp}[3][R]{\nobreak{#2\otimes_{#1}#3}}
\newcommand{\ProjR}{\operatorname{Proj}R}
\newcommand{\FlatR}{\operatorname{Flat}R}
\newcommand{\FlatX}{\operatorname{Flat}X}
\newcommand{\Kflat}{\mathbf{K}(\FlatX)}
\newcommand{\Ktot}{\mathbf{K}_{{\rm tac}}(\FlatX)}
\newcommand{\Ntot}{\mathbf{F}_{{\rm tac}}(\FlatX)}
\newcommand{\Filt}{\operatorname{Filt}}
\newcommand{\calC}{\mathcal{C}}
\newcommand{\calF}{\mathcal{F}}
\newcommand{\calG}{\mathcal{G}}
\newcommand{\calO}{\mathcal{O}}
\newcommand{\calR}{\mathcal{R}}
\newcommand{\calS}{\mathcal{S}}
\newcommand{\calU}{\mathcal{U}}
\newcommand{\Scx}[2]{\operatorname{S}_{#1}(#2)}
\newcommand{\Dcx}[2]{\operatorname{D}_{#1}(#2)}
\newcommand{\Ch}[1]{\operatorname{Ch}(#1)}
\newcommand{\Id}[1]{\operatorname{Id}^#1} 
\newcommand{\Hom}{\operatorname{Hom}} 
\newcommand{\Ext}[4]{\operatorname{Ext}^{#1}_{#2}(#3,#4)}
\newcommand{\Mor}{\operatorname{Mor}} 
\newcommand{\F}{\mathscr{F}} 
\newcommand{\G}{\mathscr{G}} 
\renewcommand{\H}{\mathscr{H}} 
\renewcommand{\L}{\mathscr{L}} 
 
\newcommand{\M}{\mathscr{M}} 
\newcommand{\J}{\mathscr{J}} 
\newcommand{\N}{\mathscr{N}} 
\renewcommand{\P}{\mathscr{P}} 
\newcommand{\R}{\mathcal{O}_X} 
\newcommand{\fP}{\mathfrak{P}}
\newcommand{\Qcoh}{\mathfrak{Qcoh}} 
\newcommand{\cx}[1]{#1_\bullet}

\def\urltilda{\kern -.15em\lower .7ex\hbox{\~{}}\kern .04em} 

\def\soft#1{\leavevmode\setbox0=\hbox{h}\dimen7=\ht0\advance \dimen7
  by-1ex\relax\if t#1\relax\rlap{\raise.6\dimen7
  \hbox{\kern.3ex\char'47}}#1\relax\else\if T#1\relax
  \rlap{\raise.5\dimen7\hbox{\kern1.3ex\char'47}}#1\relax \else\if
  d#1\relax\rlap{\raise.5\dimen7\hbox{\kern.9ex \char'47}}#1\relax\else\if
  D#1\relax\rlap{\raise.5\dimen7 \hbox{\kern1.4ex\char'47}}#1\relax\else\if
  l#1\relax \rlap{\raise.5\dimen7\hbox{\kern.4ex\char'47}}#1\relax \else\if
  L#1\relax\rlap{\raise.5\dimen7\hbox{\kern.7ex
  \char'47}}#1\relax\else\message{accent \string\soft \space #1 not
  defined!}#1\relax\fi\fi\fi\fi\fi\fi}

\begin{document}

\title[A Zariski-local notion of \textbf{F}-total acyclicity]{A
  Zariski-local notion of \textbf{F}-total acyclicity\\ for complexes
  of sheaves}

\author[L.W. Christensen]{Lars Winther Christensen}

\address{L.W.C. \ Texas Tech University, Lubbock, TX 79409, U.S.A.}

\email{lars.w.christensen@ttu.edu}

\urladdr{http://www.math.ttu.edu/\urltilda lchriste}

\author[S. Estrada]{Sergio Estrada}

\address{S.E. \ Universidad de Murcia, Murcia 30100, Spain}

\email{sestrada@um.es}

\author[A. Iacob]{Alina Iacob}

\address{A.I. \ Georgia Southern University, Statesboro, GA 30460,
  U.S.A.}

\email{aiacob@georgiasouthern.edu}

\thanks{This research was conducted in spring 2015 during visits to
  the Centre de Recerca Matem\`atica, and during visits by L.W.C.\ and
  A.I.\ to the University of Murcia. The hospitality and support of
  both institutions is acknowledged with gratitude. L.W.C.\ was partly
  supported by NSA grant H98230-14-0140, and S.E.\ was partly
  supported by grant MTM2013-46837-P and FEDER funds. S.E.\ and A.I.\ are also supported by the grant 18394/JLI/13 by the Fundaci\'on S\'eneca-Agencia de Ciencia y Tecnolog\'{\i}a de la Regi\'on de Murcia in the framework of III PCTRM 2011-2014.}

\date{\today}

\keywords{F-total acyclicity, ascent--descent property, Gorenstein
  flat precover}

\subjclass[2010]{18F20; 18G25; 18G35}

\begin{abstract}
  We study a notion of total acyclicity for complexes of flat sheaves
  over a scheme. It is Zariski-local---i.e.\ it can be verified on any
  open affine covering of the scheme---and for sheaves over a
  quasi-compact semi-separated scheme it agrees with the categorical
  notion. In particular, it agrees, in their setting,
  with the notion studied by Murfet and Salarian for sheaves over a
  noetherian semi-separated scheme. As part of the study we recover,
  and in several cases extend the validity of, recent results on
  existence of covers and precovers in categories of sheaves. One
  consequence is the existence of an adjoint to the inclusion of these
  totally acyclic complexes into the homotopy category of complexes of
  flat sheaves.
\end{abstract}

\maketitle

\section*{Introduction}
\noindent
This paper is part of a thrust to extend Gorenstein homological
algebra to schemes. The first major advance was made by Murfet and
Salarian \cite{MS}, who introduced an operational notion of total
acyclicity over noetherian semi-separated schemes.  Total acyclicity
has its origin in Tate cohomology of finite group representations,
which is computed via, what we now call, totally acyclic complexes of
projectives. The contemporary terminology was introduced in works of
Avramov and Martsinkovsky \cite{AM} and Veliche \cite{Vel}: Given a
commutative ring $R$, a chain complex $\cx{P}$ of projective
$R$-modules is called \emph{totally acyclic} if it is acyclic and
$\Hom_R(\cx{P},Q)$ is acyclic for every projective $R$-module $Q$.

Categories of sheaves do not, in general, have enough projectives, so
it is not obvious how to define an interesting notion of total
acyclicity in this setting. Murfet and Salarian's approach was to
focus on flat sheaves: They say that a complex $\cx{\F}$ of flat
quasi-coherent sheaves over a noetherian semi-separated scheme $X$ is
\emph{\textbf{F}-totally acyclic} if it is acyclic and
$\tp[]{\mathscr{I}}{\cx{\F}}$ is acyclic for every injective
quasi-coherent sheaf $\mathscr{I}$ on $X$. This notion has its origin
in the work of Enochs, Jenda, and Torrecillas, who in \cite{EJT}
introduced it for complexes of modules. The assumptions on $X$ ensure
that a quasi-coherent sheaf on $X$ is (categorically) injective if and
only if every section of the sheaf is an injective module. In fact,
\textbf{F}-total acyclicity as defined in \cite{MS} is a
\emph{Zariski-local} property. That is, a complex $\cx{\F}$ of flat
quasi-coherent sheaves on $X$ is \textbf{F}-totally acyclic if there
is an open affine covering $\calU$ of $X$ such that $\cx{\F}(U)$ is an
\textbf{F}-totally acyclic complex of flat modules for every
$U\in\calU$.

In this paper we give a definition of \textbf{F}-total acyclicity for
complexes of flat quasi-coherent sheaves without placing any
assumptions on the underlying scheme. Our definition is Zariski-local,
and it agrees with the one from \cite{MS} when the latter applies. In
fact we prove more, namely (Proposition \ref{prp:equiv_qcss}) that
over any quasi-compact semi-separated scheme $X$, an acyclic complex
$\cx{\F}$ of flat quasi-coherent sheaves is \textbf{F}-totally acyclic
per our definition if and only if $\tp[]{\mathscr{I}}{\cx{\F}}$ is
acyclic for every injective quasi-coherent sheaf $\mathscr{I}$ on $X$.

The key to the proof of Zariski-localness (Corollary
\ref{N-tot.acyc.def}) is the next result (Proposition
\ref{prp:N-tot.local}), which says that \textbf{F}-total acyclicity
for complexes of modules is a so-called \emph{ascent--descent}
property. By a standard argument (Lemma \ref{faithflat}), this implies
that the corresponding property of complexes of quasi-coherent sheaves
is Zariski-local.

\begin{proposition*}
  Let $\varphi\colon R\to S$ be a flat homomorphism of commutative
  rings.
  \begin{enumerate}
  \item If $\cx{F}$ is an \textbf{F}-totally acyclic complex of flat
    $R$-modules, then $\tp{S}{\cx{F}}$ is an \textbf{F}-totally
    acyclic complex of flat $S$-modules.
  \item If $\varphi$ is faithfully flat and $\cx{F}$ is a complex of
    $R$-modules such that $\tp{S}{\cx{F}}$ is an \textbf{F}-totally
    acyclic complex of flat $S$-modules, then $\cx{F}$ is an
    \textbf{F}-totally acyclic complex of flat $R$-modules.
  \end{enumerate}
\end{proposition*}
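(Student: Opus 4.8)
The plan is to verify, in each direction, the three conditions defining \textbf{F}-total acyclicity of a complex: that the terms are flat, that the complex is acyclic, and that it remains acyclic after applying $\tp[R]{I}{-}$ for every injective module $I$. The first two conditions are handled by standard (faithfully) flat base-change facts: $\tp{S}{-}$ carries flat $R$-modules to flat $S$-modules and preserves acyclicity because $S$ is flat over $R$; conversely, faithful flatness lets one descend flatness of the terms $\tp{S}{F_i}$ and acyclicity of $\tp{S}{\cx F}$. So in both parts the work is concentrated in the tensor-with-injective condition, and the two parts turn out to be mirror images of one another once one understands how injective modules behave under $\f$.

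For part (1), the key point is that restriction of scalars along a \emph{flat} homomorphism sends injective modules to injective modules: if $J$ is an injective $S$-module, then $\Hom_{R}(-,J)\cong\Hom_{S}(\tp{S}{-},J)$ is a composite of exact functors---here $\tp{S}{-}$ is exact because $S$ is $R$-flat, and $\Hom_{S}(-,J)$ is exact because $J$ is $S$-injective---hence $J$ is injective as an $R$-module. Then $\tp[S]{J}{(\tp{S}{\cx F})}\cong\tp[R]{J}{\cx F}$ (associativity, with $J$ restricted to $R$ on the right), and this is acyclic since $\cx F$ is \textbf{F}-totally acyclic over $R$. That, together with the base-change facts above, gives (1).

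For part (2), fix an injective $R$-module $I$; the obstacle is that $\tp{S}{I}$ need not be injective over $S$, so the hypothesis on $\tp{S}{\cx F}$ does not apply to it directly. I would instead use that a faithfully flat homomorphism is pure, so the canonical map $I\to\tp{S}{I}$ is injective; choosing an embedding of $\tp{S}{I}$ into an injective $S$-module $J$ and restricting scalars exhibits $I$ as an $R$-submodule of $J$. Since $I$ is $R$-injective this embedding splits, so $I$ is a direct summand of $J$ as an $R$-module. Consequently $\tp[R]{I}{\cx F}$ is a direct summand of $\tp[R]{J}{\cx F}\cong\tp[S]{J}{(\tp{S}{\cx F})}$, and the latter complex is acyclic because $\tp{S}{\cx F}$ is \textbf{F}-totally acyclic over $S$ and $J$ is $S$-injective. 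A direct summand of an acyclic complex is acyclic, so $\tp[R]{I}{\cx F}$ is acyclic, as needed.

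The main obstacle I anticipate is precisely this mismatch in part (2) between a hypothesis phrased with $S$-injective modules and a conclusion phrased with $R$-injective modules; the resolution rests on the two small facts that faithfully flat maps are pure and that an injective module splits off from any module containing it. The remaining points are routine: keeping the extension/restriction-of-scalars and associativity isomorphisms natural enough to be isomorphisms of complexes, and invoking the standard descent statements for flatness and for exactness along a faithfully flat homomorphism.
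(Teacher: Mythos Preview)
Your proof is correct and, in outline, matches the paper: part~(1) is identical, and in part~(2) both arguments establish that the injective $R$-module $I$ is an $R$-direct summand of some injective $S$-module, then conclude by associativity and the \textbf{F}-total acyclicity hypothesis on $\tp{S}{\cx F}$.

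The difference is in how that $S$-injective is chosen. The paper takes the coinduced module $\Hom_R(S,I)$, which is automatically $S$-injective, and obtains the splitting by applying $\Hom_R(-,I)$ to the pure sequence $0\to R\to S\to S/R\to 0$; this uses that an injective module is pure-injective, so $\Hom_R(-,I)$ carries pure exact sequences to split ones. Your route instead uses purity to embed $I\hookrightarrow \tp{S}{I}$, then embeds $\tp{S}{I}$ into an arbitrary $S$-injective $J$, and splits $I$ off using only the elementary fact that an injective submodule is a direct summand. Your argument is slightly more elementary in that it avoids the pure-injectivity statement; the paper's argument has the virtue of producing a canonical $S$-injective rather than invoking the existence of enough injectives. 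Either way the essential idea---passing through an $S$-injective that contains $I$ as an $R$-summand, via purity of the faithfully flat map---is the same.
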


In lieu of projective sheaves one can focus on vector bundles---not
necessarily finite dimensional. In Section \ref{sec:totNtot} we touch
on a notion of total acyclicity for complexes of vector bundles. By
comparing it to \textbf{F}-total acyclicity we prove that it is
Zariski-local for locally coherent locally $d$-perfect schemes
(Theorems~\ref{totac.is.Ntotac}--\ref{tot.acyclic.mod}).

The keystone result of Section \ref{sec:adj} is Theorem
\ref{Ntot.complete}, which says that for any scheme $X$, the class of
\textbf{F}-totally acyclic complexes is covering in the category of
chain complexes of quasi-coherent sheaves on $X$. It has several
interesting consequences. The homotopy category of chain complexes of
flat quasi-coherent sheaves over $X$ is denoted $\Kflat$. It is a
triangulated category, and the full subcategory $\Ktot$ of
\textbf{F}-totally acyclic complexes in $\Kflat$ is a triangulated
subcategory. Theorem \ref{Ntot.complete} allows us to remove
assumptions on the scheme in \cite[Corollary 4.26]{MS} and obtain
(Corollary \ref{cor.right.adj}):

\begin{corollary*}
  For any scheme $X$, the inclusion $\Ktot\to \Kflat$ has a right
  adjoint.
\end{corollary*}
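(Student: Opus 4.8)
The plan is to deduce the corollary from Theorem~\ref{Ntot.complete} via a standard cotorsion-pair / covering-class argument, the same one that underlies \cite[Corollary 4.26]{MS}. First I would recall that $\Kflat$ is a triangulated category and $\Ktot$ is a triangulated subcategory closed under coproducts (the coproduct of \textbf{F}-totally acyclic complexes is again one, since both acyclicity and exactness of $\mathscr{I}\otimes -$ are preserved by filtered colimits, in particular by coproducts). A right adjoint to the inclusion $\Ktot\to\Kflat$ exists precisely when every object $\cx{\F}\in\Kflat$ admits a \emph{colocalization triangle}, i.e.\ a triangle $\cx{\mathscr{T}}\to\cx{\F}\to\cx{Y}\to$ with $\cx{\mathscr{T}}\in\Ktot$ and $\cx{Y}$ right-orthogonal to $\Ktot$ in $\Kflat$ (that is, $\Hom_{\Kflat}(\cx{\mathscr{T}'},\cx{Y})=0$ for all $\cx{\mathscr{T}'}\in\Ktot$). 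So the whole task reduces to producing such a triangle for an arbitrary complex of flat quasi-coherent sheaves.

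Here is where Theorem~\ref{Ntot.complete} enters. It tells us that the class $\Ntot$ (or rather the class of \textbf{F}-totally acyclic complexes, viewed inside $\Ch{\Qcoh X}$) is covering. So, given $\cx{\F}$, take an \textbf{F}-totally acyclic cover $\mapdef{\pi}{\cx{\mathscr{T}}}{\cx{\F}}$ in the abelian category $\Ch{\Qcoh X}$. I would then form the degreewise-split, or rather the appropriate short exact sequence $0\to\cx{K}\to\cx{\mathscr{T}}\xra{\pi}\cx{\F}\to C\to 0$; since $\cx{\F}$ is a complex of flats one can arrange (after the usual bookkeeping, replacing $\cx{\mathscr{T}}$ by a mapping-cone-type modification) that $\pi$ is surjective with kernel $\cx{Y}$, giving a short exact sequence of complexes $0\to\cx{Y}\to\cx{\mathscr{T}}\to\cx{\F}\to 0$. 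The point is that such a short exact sequence, in which $\cx{\mathscr{T}}$ and $\cx{\F}$ are complexes of flat sheaves, is degreewise pure-exact, hence degreewise split, so it induces a triangle in $\Kflat$. The defining property of a cover — that any map from an \textbf{F}-totally acyclic complex factors through $\pi$, and that the factorization through $\mathrm{End}$ is an automorphism — is exactly what forces $\Hom_{\Kflat}(\cx{\mathscr{T}'},\cx{Y})=0$ for every \textbf{F}-totally acyclic $\cx{\mathscr{T}'}$; this is the translation of "special precover with the Wakamatsu-type vanishing'' into the homotopy category, and it uses that $\cx{Y}$ itself is then right-orthogonal to $\Ktot$. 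Assembling these, the triangle $\cx{\mathscr{T}}\to\cx{\F}\to\cx{Y}[1]\to$ (up to shift) is the required colocalization triangle.

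Finally I would invoke the standard fact (e.g.\ from Neeman's or Krause's treatment of Bousfield colocalization) that the existence of such a triangle for every object is equivalent to the inclusion $\Ktot\hookrightarrow\Kflat$ admitting a right adjoint, the adjoint sending $\cx{\F}$ to $\cx{\mathscr{T}}$. One must check functoriality of the assignment $\cx{\F}\mapsto\cx{\mathscr{T}}$ up to homotopy, but this is automatic once the triangle is a colocalization triangle, since the right-orthogonal complement is then a triangulated subcategory and the decomposition is unique up to unique isomorphism in $\Kflat$.

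The main obstacle I anticipate is the passage from a cover in the abelian category $\Ch{\Qcoh X}$ (which is what Theorem~\ref{Ntot.complete} literally provides) to a colocalization triangle in the \emph{homotopy} category $\Kflat$: one has to verify that the cover can be taken to sit in a degreewise-split short exact sequence of complexes of \emph{flat} sheaves, and that the Wakamatsu/orthogonality property of the cover survives passing to homotopy classes of chain maps. Concretely, the subtlety is that $\Hom_{\Kflat}(-,-)$ is a quotient of $\Hom_{\Ch{}}(-,-)$, so vanishing of the latter is stronger than needed but one must still rule out null-homotopic-only obstructions; this is handled by the observation that for a complex $\cx{\mathscr{T}}$ that is both \textbf{F}-totally acyclic and a cover, any chain map to the kernel $\cx{Y}$ that is null-homotopic is in particular zero in $\Kflat$, and any non-null-homotopic one would contradict the factorization property of the cover. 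Making this rigorous is the heart of the argument; the rest is formal.
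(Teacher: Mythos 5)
Your overall strategy differs from the paper's, and as written it has genuine gaps at exactly the point you yourself identify as ``the heart of the argument.'' The paper does not construct colocalization triangles at all: it observes that $\Ktot$ is a thick subcategory of $\Kflat$, notes that the covers produced by Theorem~\ref{Ntot.complete} in $\Ch{\Qcoh(X)}$ descend to $\Ktot$-precovers in the homotopy category (chain-level factorizations give homotopy-level ones), and then invokes Neeman's result \cite[Proposition 1.4]{Nee2}, which produces a right adjoint from thickness plus mere precovers, with no orthogonality condition on any cone. That machinery exists precisely because the ``special precover $\Rightarrow$ colocalization triangle'' translation you attempt is problematic in homotopy categories of flats.

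Concretely, two steps in your sketch fail. First, the claim that a degreewise pure exact sequence $0\to\cx{Y}\to\cx{\mathscr{T}}\to\cx{\F}\to 0$ of complexes of flat sheaves is ``hence degreewise split'' is false: purity of $0\to Y_n\to \mathscr{T}_n\to \F_n\to 0$ (which does hold, since $\F_n$ is flat) does not imply splitness --- compare $0\to K\to R^{(I)}\to F\to 0$ with $F$ flat non-projective. Without degreewise splitness the sequence gives no triangle in $\Kflat$ (it would in the pure derived category, a Verdier quotient of $\Kflat$, which changes the problem). Second, the orthogonality $\Hom_{\Kflat}(\cx{\mathscr{T}'},\cx{Y})=0$ does not follow as you suggest: a non-nullhomotopic map into $\cx{Y}$ contradicts nothing about factorizations of maps into $\cx{\F}$. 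The correct Wakamatsu-type statement gives $\Ext{1}{}{\cx{\mathscr{T}'}}{\ker\pi}=0$ for a \emph{cover} $\pi$, and to convert that into a triangle one needs $\pi$ to be an epimorphism --- which Theorem~\ref{Ntot.complete} guarantees only when $\Qcoh(X)$ has a flat generator (e.g.\ $X$ quasi-compact semi-separated), not for an arbitrary scheme; moreover, the ``mapping-cone-type modification'' you propose to force surjectivity destroys the minimality that Wakamatsu's lemma requires, and the theorem does not provide special precovers for arbitrary objects. So for general $X$ your route stalls, whereas the paper's route via \cite[Proposition 1.4]{Nee2} goes through with no extra hypotheses; if you want to salvage your approach, you should restrict to the case of a flat generator and replace the hand-waved orthogonality step by an honest Wakamatsu argument together with the identification of $\Hom_{\Kflat}(-,\Sigma-)$ with degreewise split extensions.
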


A \emph{Gorenstein flat} quasi-coherent sheaf is defined as a cycle
sheaf in an \textbf{F}-totally acyclic complex of flat quasi-coherent
sheaves. Enochs and Estrada \cite{EE} prove that every quasi-coherent
sheaf over any scheme has a flat precover. As another consequence
(Corollary \ref{Gor.flat.prec}) of Theorem \ref{Ntot.complete} we
obtain a Gorenstein version of this result; the affine case was
already proved by Yang and Liang \cite{yang14}.

\begin{corollary*}
  Let $X$ be a scheme. Every quasi-coherent sheaf on $X$ has a
  Gorenstein flat precover. If $X$ is quasi-compact and
  semi-separated, then the Gorenstein flat precover is an epimorphism.
\end{corollary*}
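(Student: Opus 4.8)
The plan is to reduce the statement to Theorem~\ref{Ntot.complete} through the standard passage between complexes and their cycle sheaves. Recall that, by definition, the Gorenstein flat quasi-coherent sheaves are exactly the sheaves $\Cy[0]{\cx{\F}}$ with $\cx{\F}$ an \textbf{F}-totally acyclic complex of flat quasi-coherent sheaves; the choice of degree is immaterial, since the class $\mathcal{X}$ of such complexes is closed under shifts. Fix a quasi-coherent sheaf $\mathscr{M}$. Using that $\Qcoh X$ is a Grothendieck category with enough injectives and that it has flat covers --- equivalently, that the cotorsion pair of flat and cotorsion quasi-coherent sheaves is complete (Enochs and Estrada \cite{EE}) --- I would first build an acyclic complex
\[
  \cx{\mathscr{D}}\colon\qquad\cdots\lra\mathscr{D}_{2}\lra\mathscr{D}_{1}\lra\mathscr{D}_{0}\lra\mathscr{D}_{-1}\lra\cdots
\]
with $\Cy[0]{\cx{\mathscr{D}}}=\mathscr{M}$, whose non-positive part $0\to\mathscr{M}\to\mathscr{D}_{0}\to\mathscr{D}_{-1}\to\cdots$ is an injective coresolution of $\mathscr{M}$, and whose non-negative part $\cdots\to\mathscr{D}_{2}\to\mathscr{D}_{1}\to\mathscr{M}\to0$ is obtained by iterating special flat precovers, so that $\mathscr{D}_{n}$ is flat and $\Cy[n]{\cx{\mathscr{D}}}$ is cotorsion for every $n\ge1$.

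Next I would apply Theorem~\ref{Ntot.complete}: the class $\mathcal{X}$ is covering, hence precovering, in $\Ch(\Qcoh X)$, so there is an $\mathcal{X}$-precover $\pi\colon\cx{\F}\to\cx{\mathscr{D}}$ with $\cx{\F}\in\mathcal{X}$. Then $\G:=\Cy[0]{\cx{\F}}$ is Gorenstein flat, and I claim $p:=\Cy[0]{\pi}\colon\G\to\Cy[0]{\cx{\mathscr{D}}}=\mathscr{M}$ is a Gorenstein flat precover. Let $\G'=\Cy[0]{\cx{\F'}}$ be Gorenstein flat, with $\cx{\F'}\in\mathcal{X}$, and let $g\colon\G'\to\mathscr{M}$ be arbitrary. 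The crucial step is to lift $g$ to a chain map $\widehat{g}\colon\cx{\F'}\to\cx{\mathscr{D}}$ with $\Cy[0]{\widehat{g}}=g$: descending from degree $0$, one extends successively along the monomorphisms $\Cy[n]{\cx{\F'}}\hookrightarrow\F'_{n}$ for $n\le0$, which works because each $\mathscr{D}_{n}$ with $n\le0$ is injective; ascending from degree $0$, one lifts successively along the epimorphisms $\mathscr{D}_{n}\twoheadrightarrow\Cy[n-1]{\cx{\mathscr{D}}}$ for $n\ge1$, which works because $\F'_{n}$ is flat and $\Cy[n]{\cx{\mathscr{D}}}$ is cotorsion, so the relevant $\operatorname{Ext}^{1}$-obstruction vanishes --- this is exactly the point at which it is used that $\cx{\F'}$ is a complex of \emph{flat} sheaves. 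Since $\pi$ is an $\mathcal{X}$-precover and $\cx{\F'}\in\mathcal{X}$, the map $\widehat{g}$ factors as $\pi\circ h$ for some chain map $h\colon\cx{\F'}\to\cx{\F}$, and applying $\Cy[0]{-}$ gives $g=p\circ\Cy[0]{h}$. So $g$ factors through $p$, proving the first assertion.

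For the second assertion, assume $X$ is quasi-compact and semi-separated. Here I would promote the $\mathcal{X}$-precover $\pi$ to an epimorphism --- for instance because under these hypotheses the relevant cotorsion pair in $\Ch(\Qcoh X)$ is complete, so $\pi$ may be chosen to sit in a short exact sequence $0\to\cx{\mathscr{Y}}\to\cx{\F}\to\cx{\mathscr{D}}\to0$ with $\cx{\mathscr{Y}}\in\mathcal{X}^{\perp}$; alternatively one may use that over such a scheme every quasi-coherent sheaf is an epimorphic image of a flat one. As $\cx{\F}$ and $\cx{\mathscr{D}}$ are acyclic, the kernel of $\pi$ is acyclic too, and a short exact sequence of acyclic complexes induces a short exact sequence of degree-zero cycle sheaves; hence $p=\Cy[0]{\pi}$ is an epimorphism.

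I expect the main obstacle to be the construction of $\cx{\mathscr{D}}$ together with the lifting of $g$ to $\widehat{g}$: one has to splice an injective coresolution on the left with a ``complete-resolution-like'' flat resolution with cotorsion syzygies on the right, arranged so that the downward extensions (requiring injectivity of the target terms) and the upward lifts (requiring flatness of the source terms against cotorsion-ness of the target cycles) can all be performed. Everything else --- that $\Cy[0]{\cx{\F}}$ is Gorenstein flat, the factorization through $p$, and the epimorphism clause under the extra hypotheses --- is formal.
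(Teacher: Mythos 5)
Your overall strategy---reduce to Theorem \ref{Ntot.complete} by taking an $\Ntot$-precover of an auxiliary acyclic complex with degree-zero cycles $\M$, then pass to cycles---is sound, and your lifting argument (downward extensions against injective terms, upward lifts using $\Ext{1}{\Qcoh(X)}{\mathrm{flat}}{\mathrm{cotorsion}}=0$) is correct as far as it goes. The genuine gap is in the construction of $\cx{\mathscr{D}}$ itself. Acyclicity of $\cx{\mathscr{D}}$ in degree $0$ forces $\mathscr{D}_1\to\M$ (and each $\mathscr{D}_{n+1}\to\Cy[n]{\cx{\mathscr{D}}}$) to be an \emph{epimorphism}, so your splice requires every quasi-coherent sheaf to admit an epimorphic special flat precover, i.e.\ that $\Qcoh(X)$ has enough flat objects, equivalently that the flat--cotorsion pair is complete in the sense of the paper. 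Over an arbitrary scheme this is not available: what Enochs and Estrada \cite{EE} prove is that flat \emph{covers} exist, and such covers need not be epimorphisms; ``flat covers exist'' is not the same as completeness of the cotorsion pair. Indeed, this is precisely why both Theorem \ref{Ntot.complete} and the corollary reserve the epimorphism statement for the case where $\Qcoh(X)$ has a flat generator (e.g.\ $X$ quasi-compact and semi-separated). So, as written, your argument establishes the first, unconditional, assertion only under hypotheses of that kind, whereas the corollary claims it for every scheme.

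The paper avoids resolving $\M$ altogether: it takes an $\Ntot$-cover $\varphi\colon\cx{\F}\to\Scx{1}{\M}$ of the stalk complex and uses the isomorphism $\Hom_{\Ch{\Qcoh(X)}}(\cx{\mathscr A},\Scx{1}{\M})\is\Hom_{\Qcoh(X)}(\Cy[0]{\cx{\mathscr A}},\M)$, valid for every \emph{acyclic} complex $\cx{\mathscr A}$ (a chain map into $\Scx{1}{\M}$ is a map $\mathscr A_1\to\M$ killing $\Bo[1]{\cx{\mathscr A}}=\Cy[1]{\cx{\mathscr A}}$, hence a map $\Cy[0]{\cx{\mathscr A}}\to\M$). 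This converts the precover property of $\varphi$ directly into the precover property of $\Cy[0]{\cx{\F}}\to\M$, with no injective coresolution, no cotorsion cycles, and no need for enough flat sheaves; if you replace your $\cx{\mathscr{D}}$ by $\Scx{1}{\M}$, your factorization argument goes through verbatim. A further small point on your second part: completeness of the cotorsion pair generated by $\Ntot$ in $\Ch{\Qcoh(X)}$ is not established in the paper either, so that route is also unjustified; but it is unnecessary---under quasi-compactness and semi-separation one can either quote the epimorphism clause of Theorem \ref{Ntot.complete}, or, as the paper does, observe that a flat generator is Gorenstein flat, which immediately forces every Gorenstein flat precover to be an epimorphism.
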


Finally, Theorem \ref{Ntot.complete} combines with Theorem
\ref{totac.is.Ntotac} to yield (Corollary \ref{cor.Gor.prec}):

\begin{corollary*}
  Let $R$ be a commutative coherent $d$-perfect ring. Every $R$-module
  has a Gorenstein projective precover.
\end{corollary*}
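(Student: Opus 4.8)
The plan is to deduce the affine corollary (Gorenstein projective precovers over a commutative coherent $d$-perfect ring $R$) from the two headline theorems of the paper by passing through the category of quasi-coherent sheaves on the affine scheme $X = \Spec R$, which is equivalent to the category of $R$-modules. The essential input is the identification, on such a ring, between Gorenstein projective modules and Gorenstein flat modules, which is exactly what Theorem~\ref{totac.is.Ntotac} (together with its module-level companion, Theorem~\ref{tot.acyclic.mod}) provides: over a coherent $d$-perfect ring every totally acyclic complex of projectives is \textbf{F}-totally acyclic, and conversely, so the class of Gorenstein projective modules coincides with the class of Gorenstein flat modules. Granting this, a Gorenstein flat precover is a Gorenstein projective precover and vice versa.

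First I would invoke Theorem~\ref{Ntot.complete} for the scheme $X = \Spec R$: the class of \textbf{F}-totally acyclic complexes of flat quasi-coherent sheaves is covering in $\Ch{\Qcoh X}$. Restricting to sheaves concentrated in a single degree, or more precisely taking cycle sheaves, yields that the class of Gorenstein flat quasi-coherent sheaves is covering—hence in particular precovering—in $\Qcoh X$; this is the content of the Gorenstein flat precover corollary (Corollary~\ref{Gor.flat.prec}) in the affine case. Under the equivalence $\Qcoh(\Spec R) \simeq R\text{-Mod}$, a Gorenstein flat quasi-coherent sheaf corresponds to a Gorenstein flat $R$-module, so every $R$-module has a Gorenstein flat precover. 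Second, I would apply the coherent $d$-perfect hypothesis through Theorem~\ref{totac.is.Ntotac}/\ref{tot.acyclic.mod} to rewrite "Gorenstein flat" as "Gorenstein projective," converting the Gorenstein flat precover into a Gorenstein projective precover. One should be slightly careful that the precovering property is genuinely a statement about the class as a subcategory of $R$-Mod and not about the ambient complex category, but since a precover is witnessed by a single module map with the appropriate lifting property, and the two classes of modules literally coincide as subcategories, the transfer is immediate.

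The main obstacle, and the step that deserves the most care in the write-up, is making sure the coincidence of Gorenstein projective and Gorenstein flat modules over a coherent $d$-perfect ring is actually delivered in the strength needed. We need not just that every Gorenstein projective module is Gorenstein flat (which holds quite generally, since a totally acyclic complex of projectives is a complex of flats and one checks the $\mathscr{I}\otimes-$ condition), but the converse: that over such a ring every Gorenstein flat module is Gorenstein projective. This is where finiteness of global flat (equivalently projective) dimension—the "$d$-perfect" condition—and coherence enter: they guarantee that an \textbf{F}-totally acyclic complex of flats can be spliced/replaced so as to become a totally acyclic complex of projectives with the same cycles, and this is precisely Theorem~\ref{tot.acyclic.mod}. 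Assuming that theorem as stated, the deduction is short. The only remaining bookkeeping is to note that the affine scheme $\Spec R$ of a commutative ring is automatically quasi-compact and semi-separated, so all hypotheses needed to apply the sheaf-theoretic results are met, and that the equivalence of categories $\Qcoh(\Spec R)\simeq R\text{-Mod}$ is exact and respects tensor products with injectives, so it carries \textbf{F}-totally acyclic complexes of sheaves to \textbf{F}-totally acyclic complexes of modules and back.
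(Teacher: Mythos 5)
There is a genuine gap at the step you yourself flag as the crux: the claimed identity ``Gorenstein flat $=$ Gorenstein projective over a coherent $d$-perfect ring'' is not delivered by Theorems \ref{totac.is.Ntotac} and \ref{tot.acyclic.mod}. Both of those theorems are statements about complexes of vector bundles, i.e.\ in the affine case about complexes of \emph{projective} modules: Theorem \ref{totac.is.Ntotac} says a totally acyclic complex of projectives is \textbf{F}-totally acyclic (giving Gorenstein projective $\Rightarrow$ Gorenstein flat), and Theorem \ref{tot.acyclic.mod} says an \textbf{F}-totally acyclic complex of \emph{projectives} is totally acyclic. Neither says anything about an \textbf{F}-totally acyclic complex of \emph{flat} modules, which is all that the definition of a Gorenstein flat module provides. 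So your assertion that these theorems ``guarantee that an \textbf{F}-totally acyclic complex of flats can be spliced/replaced so as to become a totally acyclic complex of projectives with the same cycles'' is not what Theorem \ref{tot.acyclic.mod} says, and without that replacement the inclusion Gorenstein flat $\subseteq$ Gorenstein projective --- which you need in order for the Gorenstein flat precover from Corollary \ref{Gor.flat.prec} to have a Gorenstein projective domain --- is unproved. (The opposite inclusion, and hence the factorization of maps from Gorenstein projective modules through a Gorenstein flat precover, is fine.)

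The paper's argument is designed precisely to avoid this identification. It uses the complete cotorsion pair $(\Ch{\ProjR},\Ch{\ProjR}^{\perp})$ to produce, for an \textbf{F}-totally acyclic complex $F$ of flats, an exact sequence $0\to E\to P\to F\to 0$ with $P$ a complex of projectives and $E\in\Ch{\ProjR}^{\perp}\cap\Ch{\FlatR}$, which by Neeman's theorem is a pure acyclic complex of flats; hence $P$ is \textbf{F}-totally acyclic, and only now does Theorem \ref{tot.acyclic.mod} apply to conclude that $P$ is totally acyclic. This yields a totally acyclic \emph{precover} $P\to F$, which composed with the \textbf{F}-totally acyclic cover of an arbitrary complex $M$ (Theorem \ref{Ntot.complete}, with Theorem \ref{totac.is.Ntotac} used to see that test objects, i.e.\ totally acyclic complexes, are \textbf{F}-totally acyclic) gives a totally acyclic precover of $M$; taking cycles as in Corollary \ref{Gor.flat.prec} finishes the proof. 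Note that this construction only exhibits a Gorenstein flat module as a quotient of a Gorenstein projective one by a flat module, not as itself Gorenstein projective; so if you want to run your shorter argument you must either prove the inclusion Gorenstein flat $\subseteq$ Gorenstein projective separately (it is not established anywhere in this paper) or fall back on the cotorsion-pair construction above. The peripheral points you make --- that $\Spec R$ is quasi-compact and semi-separated, that $\Qcoh(\Spec R)\simeq R\text{-Mod}$ transports everything, and that precovering is a property of the class of modules itself --- are all correct but do not close this gap.
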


\noindent This partly recovers results of Estrada, Iacob, and
Odaba\c{s}\i\ \cite[Corollary~2]{EIO} and of Bravo, Gillespie, and
Hovey \cite[Proposition~8.10]{BGH}.

\section{Preliminaries}

\noindent
Let $\kappa$ be a cardinal, by which we shall always mean a regular
cardinal. An object $A$ in a category $\calC$ is called
\emph{$\kappa$-presentable} if the functor $\Hom_{\calC}(A,-)$
preserves $\kappa$-directed colimits. A category $\calC$ is called
\emph{locally $\kappa$-presentable} if it is cocomplete and there is a
set $\calS$ of $\kappa$-presentable objects in $\calC$ such that every
object in $\calC$ is a $\kappa$-directed colimit of objects in
$\calS$.

An $\aleph_0$-directed colimit is for short called a \emph{direct
  limit.}

\begin{numpar1}
  \label{ss:pure}
  Let $\kappa$ be a cardinal and $\calC$ be a category. We recall from
  the book of Ad{\'a}mek and J. Rosick{\'y} \cite[Definition 2.27]{AR}
  that a morphism $\mapdef{\f}{A}{B}$ in $\calC$ is called
  \emph{$\kappa$-pure} if for every commutative square
  \begin{equation*}
    \xymatrix{
      A'\ar[r]^-{\f'} \ar[d]^-\a & B'\ar[d] \\
      A\ar[r]^-{\f} & B 
    }
  \end{equation*}
  where the objects $A'$ and $B'$ are $\kappa$-presentable, there
  exits a morphism $\mapdef{\g}{B'}{A}$ with $\g\circ\f' = \a$. A
  subobject $A \subseteq B$ is called \emph{$\kappa$-pure} if the
  monomorphism $A \to B$ is $\kappa$-pure.
\end{numpar1}

\begin{numpar2}
  \label{ss:complexes}
  In the balance of this section, $\calG$ denotes a Grothendieck
  category, and $\Ch{\calG}$ denotes the category of chain complexes
  over $\calG$. It is elementary to verify that $\Ch{\calG}$ is
  also a Grothendieck category. We use homological notation, so a
  complex $\cx{M}$ in $\Ch{\calG}$ looks like this
  \begin{equation*}
    \cx{M} \deq \cdots\lra M_{i+1} \xra{\partial_{i+1}}  
    M_i \xra{\partial_i} M_{i-1} \lra \cdots 
  \end{equation*}
  We denote by $\Cy[n]{\cx{M}}$ and $\Bo[n]{\cx{M}}$ the $n$th cycle
  and $n$th boundary object of $\cx{M}$.

  Let $M$ be an object in $\calG$. We denote by $\Scx{n}{M}$ the
  complex with $M$ in degree $n$ and $0$ elsewhere. By $\Dcx{n}{M}$ we
  denote the complex with $M$ in degrees $n$ and $n-1$, differential
  $\partial_n= \Id{M}$, the identity map, and $0$ elsewhere.
\end{numpar2}

\begin{numpar3}
  Let $\calF$ be a class of objects in $\calG$. A morphism
  $\mapdef{\phi}{F}{M}$ in $\calG$ is called an
  \emph{$\calF$-precover} if $F$ is in $\calF$
  and $$\Hom_{\calG}(F',F) \lra \Hom_{\calG} (F',M) \lra 0$$ is exact
  for every $F' \in \calF$. Further, if $\mapdef{\phi}{F}{M}$ is a
  precover and every morphism $\mapdef{\s}{F}{F}$ with $\phi\s = \phi$
  is an automorphism, then $\phi$ is called an \emph{$\calF$-cover.}
  If every object in $\calG$ has an $\calF$-(pre)cover, then the class
  $\calF$ is called \emph{(pre)covering}.
\end{numpar3}
The dual notions are \emph{(pre)envelope} and \emph{(pre)enveloping.}

\begin{numpar4}
  Let $\calF$ be a class of objects in $\calG$ and consider the
  orthogonal classes
  \begin{align*}
    \calF^\perp &\deq \setof{G\in \calG}
    {\Ext{1}{\calG}{F}{G}=0\ \text{for all } F\in \calF}\ \text{ and}\\
    ^{\perp}\calF &\deq \setof{G\in \calG} {\Ext{1}{\calG}{G}{F}=0\
      \text{for all } F\in \calF}\:.
  \end{align*}
  Let $\calS \subseteq \calF$ be a set. The pair
  $(\calF,\calF^{\perp})$ is said to be \emph{cogenerated by the set
    $\calS$} if an object $G$ belongs to $\calF^\perp $ if and only if
  $\Ext{1}{\calG}{F}{G}=0$ holds for all $F\in \calS$.

  A pair $(\calF,\calC)$ of classes in $\calG$ with
  $\calF^{\perp}=\calC$ and $^{\perp}\calC=\calF$ is called a
  \emph{cotorsion pair}.  A cotorsion pair $(\calF,\calC)$ in $\calG$
  is called \emph{complete} provided that for every $M\in \calG$ there
  are short exact sequences $0\to C\to F\to M\to 0$ and $0\to M\to
  C'\to F'\to 0$ with $F,F'\in \calF$ and $C,C'\in \calC$. Notice that
  for every complete cotorsion pair $(\calF,\calC)$, the class $\calF$
  is precovering and the class $\calC$ is preenveloping.
\end{numpar4}

\begin{numpar5}
  Let $\calF$ be a class of objects in $\calG$ and $\kappa$ be a
  cardinal. One says that $\calF$ is a \emph{$\kappa$-Kaplansky class}
  if for every inclusion $Z\subseteq F$ of objects in $\calG$ such
  that $F$ is in $\calF$ and $Z$ is $\kappa$-presentable, there exists
  a $\kappa$-presentable object $W$ in $\calF$ with $Z\subseteq
  W\subseteq F$ and such that $F/W$ belongs to $\calF$. We say that
  $\calF$ is a \emph{Kaplansky class} if it is a $\kappa$-Kaplansky
  for some cardinal $\kappa$.
\end{numpar5}

\begin{proposition}
  \label{Kap.covering}
  Every Kaplansky class in $\calG$ that is closed under extensions and
  direct limits is covering.
\end{proposition}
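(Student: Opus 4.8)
The plan is to prove this via the Eklof--Trlifaj style machinery, using the fact that a Kaplansky class closed under extensions gives rise to a cotorsion pair cogenerated by a set, which is then complete by the small object argument, and to promote ``precovering'' to ``covering'' using closure under direct limits (this is the analogue of the Enochs result that a precovering class closed under direct limits is covering).

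\begin{proof}[Proof plan]
Suppose $\calF$ is a $\kappa$-Kaplansky class in $\calG$ that is closed under extensions and direct limits. The first step is to extract a \emph{set} $\calS$ of $\kappa$-presentable objects of $\calF$ that ``controls'' $\calF$ in the sense that $(\calF,\calF^\perp)$ is cogenerated by $\calS$. Concretely, let $\calS$ be a set of representatives, up to isomorphism, of all $\kappa$-presentable objects lying in $\calF$; since $\calG$ is locally presentable (being Grothendieck) there is, up to isomorphism, only a set of $\kappa$-presentable objects, so $\calS$ is a set. I claim every object $F\in\calF$ is $\calS$-filtered, i.e.\ is the union of a continuous chain $(F_\alpha)_{\alpha<\lambda}$ with $F_0=0$ and each $F_{\alpha+1}/F_\alpha$ isomorphic to a member of $\calS$. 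This is where the Kaplansky property does the work: given any $\kappa$-presentable subobject $Z\subseteq F$, the definition supplies a $\kappa$-presentable $W\in\calF$ with $Z\subseteq W\subseteq F$ and $F/W\in\calF$, and one builds the filtration by transfinite recursion, at each stage enlarging the current subobject $F_\alpha$ to absorb one more generator of $F$ (possible since $F$ itself is a $\kappa$-directed colimit of $\kappa$-presentables) using this property; continuity is imposed at limit ordinals. The successive quotients $F_{\alpha+1}/F_\alpha$ are $\kappa$-presentable and lie in $\calF$, hence (after choosing an isomorphic copy) lie in $\calS$.

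With every object of $\calF$ being $\calS$-filtered, Eklof's Lemma gives that $\calS$-filtered objects have no $\Ext^1$ into $\calS^\perp$, so $\calF\subseteq {}^\perp(\calS^\perp)$; conversely, since $\calS\subseteq\calF$ is closed under extensions and—by a standard consequence of Eklof's Lemma together with closure under direct limits and transfinite extensions—also closed under the relevant filtrations, one gets $(\calF,\calF^\perp)$ is cogenerated by the set $\calS$, and in particular $\calF^\perp=\calS^\perp$. The next step is to invoke the small object argument: a cotorsion pair in a Grothendieck category cogenerated by a set is complete. (This is the Eklof--Trlifaj theorem; I would cite it, since it precedes the statement in spirit, or reprove the precovering half directly: for $M\in\calG$ take a short exact sequence $0\to K\to P\to M\to 0$ and do the standard transfinite pushout construction along the set $\calS$ to produce $0\to C\to F\to M\to 0$ with $F$ an $\calS$-filtered—hence $\calF$, by closure under extensions and continuity—object and $C\in\calF^\perp={}^{\phantom{.}}\calS^\perp$.) Completeness of $(\calF,\calF^\perp)$ yields that $\calF$ is precovering.

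The final step upgrades \emph{precovering} to \emph{covering}. Here I would use the theorem of Enochs: if $\calF$ is a precovering class closed under direct limits in a Grothendieck category, then $\calF$ is covering. The mechanism is that an $\calF$-precover $\phi\colon F\to M$ can be ``minimalized'': one iterates, taking direct limits along a suitable directed system of precovers to kill the non-automorphism endomorphisms $\sigma$ with $\phi\sigma=\phi$, and closure under direct limits keeps the limit object inside $\calF$ while preserving the precover property (every $F'\in\calF$ with $F'$ mapping to $M$ still factors, because $\Hom(F',-)$ behaves well enough along the system after passing to a large enough stage). The main obstacle, and the part deserving the most care, is the first step: verifying that the $\kappa$-Kaplansky condition really does yield an $\calS$-filtration of every $F\in\calF$ with \emph{all} successive quotients simultaneously in $\calF$ and $\kappa$-presentable, handling the bookkeeping at limit ordinals so that the chain is continuous and exhausts $F$. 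Once that filtration is in hand, everything else is an application of Eklof's Lemma, the small object argument, and Enochs's minimalization, all of which are standard.
\end{proof}
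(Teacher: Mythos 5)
Your first step (extracting a set $\calS$ of $\kappa$-presentable objects of $\calF$ and showing every object of $\calF$ is $\calS$-filtered, then applying Eklof's Lemma to get that $(\calF,\calF^\perp)$ is cogenerated by a set) and your last step (precovering plus closure under direct limits implies covering, by Enochs/Xu minimalization) both match the paper. The gap is in the middle: you assert that a cotorsion pair in a Grothendieck category cogenerated by a set is \emph{complete} and deduce precovering from that. This is exactly the point where the general Grothendieck setting differs from module categories. The Eklof--Trlifaj small object argument in a Grothendieck category only yields the preenvelope half: for every $L$ a short exact sequence $0\to L\to C\to F\to 0$ with $C\in\calF^{\perp}$ and $F\in\calF$ (this is the result of Enochs--Estrada--Garc\'{\i}a Rozas--Oyonarte the paper cites). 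The precover half of completeness requires that every object be a quotient of an object of $\calF$ (e.g.\ that $\calF$ contain a generator), and there is no reason for that here---$\calG$ need not have enough projectives, and your fallback ``take a short exact sequence $0\to K\to P\to M\to 0$'' has no candidate for $P$. Indeed, in the paper's main application ($\calF=\Ntot$ in $\Ch{\Qcoh(X)}$ for an arbitrary scheme) the class is not known to be generating, which is precisely why Theorem \ref{Ntot.complete} only claims covers are epimorphisms when a flat generator exists. A secondary imprecision: $(\calF,\calF^\perp)$ need not be a cotorsion pair at all, since $\calF$ may be strictly contained in ${}^{\perp}(\calF^{\perp})$; the paper only uses ``cogenerated by a set'' in the weaker sense $\calF^{\perp}=\calS^{\perp}$, so even a complete cotorsion pair $({}^{\perp}(\calS^{\perp}),\calS^{\perp})$ would a priori give ${}^{\perp}(\calS^{\perp})$-precovers, not $\calF$-precovers.

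The paper's proof circumvents the missing generator as follows: for $M\in\calG$ form the trace $\widetilde{M}=\sum_{\varphi\in\Hom(F,M),\,F\in\calF}\operatorname{Im}(\varphi)$; since $\calF$ is closed under coproducts there is an epimorphism $E\to\widetilde{M}$ with $E\in\calF$, say with kernel $L$; apply the special preenvelope $0\to L\to C\to F\to 0$ ($C\in\calF^{\perp}$, $F\in\calF$) to $L$ and take the pushout to get $0\to C\to D\to\widetilde{M}\to 0$ with $D\in\calF$ by closure under extensions. Then $D\to\widetilde{M}$, viewed as a map to $M$, is an $\calF$-precover of $M$ even though it need not be an epimorphism onto $M$, because every map from an object of $\calF$ into $M$ factors through $\widetilde{M}$ and then lifts along $D\to\widetilde{M}$ since $C\in\calF^{\perp}$. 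To repair your argument you would need to supply this (or an equivalent) device; as written, the step ``cogenerated by a set $\Rightarrow$ complete $\Rightarrow$ precovering'' does not go through in an arbitrary Grothendieck category.
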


\begin{proof}
  Let $\kappa$ be a cardinal; let $\calF$ be a $\kappa$-Kaplansky
  class in $\calG$ and assume that it is closed under extensions and
  direct limits.  It now follows from Eklof's lemma \cite[Lemma~1]{ET}
  that the pair $(\calF,\calF^\perp)$ is cogenerated by a set.

  Let $M$ be an object in $\calG$. Denote by $\widetilde{M}$ the sum
  of all images in $M$ of morphisms with domain in $\calF$. That is,
  $\widetilde{M} = \sum_{\varphi\in \Hom(F,M),\,F\in \calF}{\rm
    Im}(\varphi)$; as $\calG$ is well-powered the sum is
  well-defined. Since $\calF$ is closed under coproducts, there exists
  a short exact sequence $0\to L\to E\to \widetilde{M}\to 0$, with
  $E\in \calF$. By a result of Enochs, Estrada, Garc\'{\i}a Rozas, and
  Oyonarte \cite[Theorem~2.5]{EEGO} there is a short exact sequence
  $0\to L\to C\to F\to 0$ with $C\in \calF^{\perp}$ and $F\in \calF$.
  Consider the push-out diagram,
  \begin{equation*}
    \xymatrix{
      & 0\ar[d] & 0\ar[d] &  &\\
      0\ar[r] & L\ar[d]\ar[r] & E \ar[r] \ar[d]\ar[r] &\widetilde{M}\ar@{=}[d]\ar[r] &0\\
      0\ar[r] & C\ar[d]\ar[r] & D\ar[d] \ar[r] &\widetilde{M}\ar[r] &0 \\
      & F\ar@{=}[r]\ar[d] & F\ar[d] & &\\& 0 & 0 & & 
    }
  \end{equation*}
  Since $\calF$ is closed under extensions, one has $D\in \calF$,
  and since $C\in \calF^{\perp}$ it follows that $D \to \widetilde{M}$
  is an $\calF$-precover. By the definition of $\widetilde{M}$, it
  immediately follows that $M$ has an $\calF$-precover, so the class
  $\calF$ is precovering. Finally, any precovering class that is
  closed under direct limits is covering; see Xu \cite[Theorem
  2.2.12]{Xu} for an argument in a module category that carries over
  to Grothendieck categories.
\end{proof}

\begin{numpar6}
  Recall that a well ordered direct system,
  $\setof{M_{\alpha}}{\alpha\le \lambda}$, of objects in $\calG$ is
  called \emph{continuous} if one has $M_0=0$ and, for each limit
  ordinal $\beta\leq \lambda$, one has $M_{\beta} =
  \varinjlim_{\alpha<\beta } M_{\alpha}$. If all morphisms in the
  system are monomorphisms, then the system is called a
  \emph{continuous directed union}.

  Let $\calS$ be a class of objects in $\calG$. An object $M$ in
  $\calG$ is called \emph{$\calS$-filtered} if there is a continuous
  directed union $\setof{M_{\alpha}}{ \alpha\le \lambda}$ of
  subobjects of $M$ such that $M = M_{\lambda}$ and for every
  $\alpha<\lambda$ the quotient $M_{\alpha+1}/M_{\alpha}$ is
  isomorphic to an object in $\calS$. We denote by $\Filt(\calS)$ the
  class of all $\calS$-filtered objects in $\calG$.

  Let $\kappa$ be a cardinal and $\calF$ be a $\kappa$-Kaplansky class
  in $\calG$ that is closed under direct limits. It is standard to
  verify that there exists a set $\calS$ of $\kappa$-presentable
  objects in $\calF$ with $\calF\subseteq \Filt(\calS)$; see for
  example the proof of \cite[Lemma~4.3]{G}. In general the classes
  $\calF$ and $\Filt(\calS)$ need not be equal, but if $\calF$ is
  closed under extensions, then equality holds. An explicit example of
  strict containment is provided by the (Kaplansky) class of phantom
  morphisms in the (Grothendieck) category of representations of the
  $\mathbb{A}_2$ quiver; see Estrada, Guil Asensio, and
  Ozbek~\cite{EGO}.

  \v{S}\soft{t}ov{\'{\i}}{\v{c}}ek proves in \cite[Corollary
  2.7(2)]{Sto} that every Kaplansky class $\calF$ that is closed under
  direct limits (and extensions) is \emph{deconstructible,} which per
  \cite[Definition 1.4]{Sto} means precisely that there exists a set
  $\calS$ with $\calF = \Filt(\calS)$. However, the assumption about
  closedness under extensions is not stated explicitly.
\end{numpar6}

\section{Faithfully flat descent for \textbf{F}-total acyclicity}
\label{sec:Ntot}

\begin{definition}
  \label{localprop}
  Let $X$ be a scheme with structure sheaf $\R$, and let $\fP$ be a
  property of modules over commutative rings.

  (1) A quasi-coherent sheaf $\M$ on $X$ is said to \emph{locally}
  have property $\fP$ if for every open affine subset $U\subseteq X$,
  the $\R(U)$-module $\M(U)$ has property $\fP$.

  (2) As a (local) property of quasi-coherent sheaves on $X$, the
  property $\fP$ is called \emph{Zariski-local} if the following
  conditions are equivalent for every quasi-coherent sheaf $\M$ on
  $X$.
  \begin{itemize}
  \item The sheaf $\M$ locally has property $\fP$.
  \item There exists an open affine covering $\calU$ of $X$ such that
    for every $U \in\calU$ the $\R(U)$-module $\M(U)$ has property
    $\fP$.
  \end{itemize}
\end{definition}

That is, Zariski-localness of a property of sheaves means that it can
be verified on any open affine covering. A useful classic tool for
verifying Zariski-localness is based on flat ascent and descent of the
underlying module property. We make it explicit in
Lemma~\ref{faithflat}; see also \cite[\S34.11]{SP}.

\begin{definition}
  \label{ad-property}
  Let $\fP$ be a property of modules over commutative rings and let
  $\calR$ be a class of commutative rings.
  \begin{enumerate}
  \item $\fP$ is said to \emph{ascend} in $\calR$, if for every flat
    epimorphism $R\to S$ of rings in $\calR$ and for every $R$-module
    $M$ with property $\fP$, the $S$-module $\tp{S}{M}$ has property
    $\fP$.
  \item $\fP$ is said to \emph{descend} in $\calR$ if an $R$-module
    $M$ has property $\fP$ whenever there exists a faithfully flat
    homomorphism $R\to S$ of rings in $\calR$ such that the $S$-module
    $\tp{S}{M}$ has property $\fP$.
  \end{enumerate}

  If $\fP$ ascends and descends in $\calR$, then it is called an
  \emph{ascent--descent property}, for short an \emph{AD-property}, in
  $\calR$.
\end{definition}

\begin{definition}
  \label{dfn:compat}
  A property $\fP$ of modules over commutative rings is said to be
  \emph{compatible with finite products} if the following conditions
  are equivalent for all commutative rings $R_1$ and $R_2$, all
  $R_1$-modules $M_1$, and all $R_2$-modules $M_2$.
  \begin{itemize}
  \item $M_1$ and $M_2$ have property $\fP$.
  \item The $R_1\times R_2$-module $M_1\times M_2$ has property $\fP$.
  \end{itemize}
\end{definition}

\begin{lemma}
  \label{faithflat}
  Let $X$ be a scheme with structure sheaf $\R$ and let $\fP$ be a
  property of modules over commutative rings. If $\fP$ is compatible
  with finite products and an AD--property in the class $ \calR =
  \setof{\R(U)}{U \subseteq X \text{ is an open affine subset}}$ of
  commutative rings, then $\fP$ as a property of quasi-coherent
  sheaves on $X$ is Zariski-local.
\end{lemma}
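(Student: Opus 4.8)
The plan is to prove the two implications in Definition~\ref{localprop}(2) separately; one direction is trivial and the whole content lies in showing that if $\fP$ holds on some open affine covering, then it holds on \emph{every} open affine subset. So let $\calU$ be an open affine covering of $X$ such that $\M(U)$ has property $\fP$ for every $U\in\calU$, and let $V\subseteq X$ be an arbitrary open affine subset; I must show $\M(V)$ has property $\fP$ as an $\R(V)$-module. The standard reduction is to cover $V$ by finitely many basic open affines, each of which is a distinguished open subset both of $V$ and of some member of $\calU$, and then to assemble the conclusion on $V$ from the conclusions on these pieces.

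First I would reduce to the quasi-compact case: since $V$ is affine it is quasi-compact, so finitely many members of $\calU$ meet $V$, and intersecting with $V$ we may as well assume $X=V$ is affine, say $X=\Spec R$ with $\M = \widetilde{M}$ for the $R$-module $M=\M(V)$, covered by finitely many open affines on each of which $\fP$ holds. Next, refine: each open affine in the cover can itself be covered by distinguished opens $\Spec R_{f}$ of $\Spec R$, and on such a distinguished open the section module is the localization $M_f = \tp{R_f}{M}$, which has property $\fP$ because $R\to R_f$-wait, more precisely the section module of $\M$ over $\Spec R_f$ equals $M_f$, and since $\Spec R_f$ sits inside a cover member $U$ with $\M(U)$ having $\fP$, and $M_f$ is a further localization of $\M(U)$, the ascent half of the AD-property (localizations are flat epimorphisms) gives that $M_f$ has $\fP$. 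Thus, after this refinement, I have finitely many elements $f_1,\dots,f_n\in R$ generating the unit ideal with each $M_{f_i}$ having property $\fP$.

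Now comes the descent step, which is the heart of the argument: from $f_1,\dots,f_n$ generating the unit ideal of $R$, the homomorphism $R\to S:=\prod_{i=1}^n R_{f_i}$ is faithfully flat (each $R_{f_i}$ is flat, hence the product is flat over $R$, and faithfulness follows because the $\Spec R_{f_i}$ cover $\Spec R$). Moreover $\tp{S}{M}\cong \prod_{i=1}^n M_{f_i}$ as an $S$-module, and since each $M_{f_i}$ has property $\fP$ and $\fP$ is compatible with finite products, the $S$-module $\tp{S}{M}$ has property $\fP$. Here I need $S$ and hence the $R_{f_i}$ to lie in the class $\calR$; this is exactly why $\calR$ is taken to be the rings of sections over \emph{all} open affine subsets, and why $\fP$ is required to be an AD-property in that specific class (finite products of the $R_{f_i}$ correspond to sections over disjoint unions, which one may realize inside the scheme, or one simply notes $\calR$ is the relevant class for descent as formulated). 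Applying the descent half of the AD-property to the faithfully flat map $R\to S$ then yields that $M=\M(V)$ has property $\fP$, completing the proof.

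The main obstacle I anticipate is bookkeeping around the class $\calR$: the descent step naturally produces the ring $S=\prod R_{f_i}$, and one must be careful that $\fP$ descending ``in $\calR$'' is being applied to a homomorphism $R\to S$ with both $R,S\in\calR$ — the product $S$ is not literally a section ring unless one observes that a finite disjoint union of distinguished open affines of $X$ is again open affine in $X$ (so its section ring, namely $S$, is in $\calR$), or alternatively one packages the product-compatibility and the AD-property together so that the two-step passage (split off the product, then descend each factor via the honest flat epimorphism $R\to R_{f_i}$) is legitimate. Apart from this, the only subtlety is ensuring the distinguished opens used in the refinement are simultaneously distinguished in $V$ and contained in a cover member, which is a routine consequence of the fact that distinguished opens form a basis for the Zariski topology on an affine scheme.
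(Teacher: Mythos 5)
Your argument is correct and follows essentially the same route as the paper: refine to finitely many distinguished opens of $V$ that are simultaneously distinguished opens of members of $\calU$ (the paper cites \cite[Lemma 25.11.5]{SP} for exactly this), apply ascent along the resulting localizations, invoke compatibility with finite products, and descend along the faithfully flat map $\R(V)\to\prod_{j}\R(D(f_j))$. The bookkeeping wrinkle you flag about the product ring lying in $\calR$ is present in the paper's proof as well, and your preliminary ``intersect the cover with $V$'' reduction is unnecessary (such intersections need not be affine), but neither point affects the substance.
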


\begin{proof} 
  Let $\calU =\setof{U_i}{i\in I}$ be an open affine covering of $X$
  such that for every $i\in I$ the $\R(U_i)$-module $\M(U_i)$ has
  property $\fP$. Let $U$ be an arbitrary open affine subset of $X$.
  There exists a standard open covering $U = \bigcup_{j=1}^n D(f_j)$
  such that for every $j$ there is a $U_j \in \calU$ with the property
  that $D(f_j)$ is a standard open subset of $U_j$; that is,
  $f_j\in\R(U_j)$ and $D(f_j) = \Spec (\R (U_j)_{f_j})$; see
  \cite[Lemma 25.11.5]{SP}. In particular, one has $\M(D(f_j)) \is
  \M(U_j) \otimes_{\R(U_j)} \R(U_j) _{f_j}$, and it follows that
  $\M(D(f_j))$ has property $\fP$ as it ascends in $\calR$. The
  compatibility of $\fP$ with direct products now ensures that the
  module $\prod_{j=1}^n \M(D(f_j))$ over $\prod_{j=1}^n \R(D(f_j))$
  has property $\fP$. As the canonical morphism $\R(U) \to
  \prod_{j=1}^n \R(D(f_j))$ is faithfully flat, it follows that the
  $\R(U)$-module $\M(U)$ has property $\fP$.
\end{proof}

While ascent of a module property is usually easy to prove, it can be
more involved to prove descent.  For instance, it is easy to see that
flatness is an AD-property. Also the flat Mittag-Leffler property is
known to be an AD-property: Ascent is easy to prove, while descent
follows from Raynaud and Gruson \cite[II.5.2]{RG}; see also Perry
\cite[\S9]{P} for correction of an error in \cite{RG}. The AD-property
is also satisfied by the $\kappa$-restricted flat Mittag-Leffler
modules, where $\kappa$ is an infinite cardinal (see Estrada, Guil
Asensio, and Trlifaj~\cite{EGT}).

We are also concerned with properties of complexes of sheaves and
modules; it is straightforward to extend Definitions~\ref{localprop}--\ref{dfn:compat}
and Lemma~\ref{faithflat} to the case where $\fP$ is a property of
complexes.

Next we introduce the property for which we will study
Zariski-localness.

\begin{definition}
  A complex of flat $R$-modules $\cx{F}= \cdots\to F_{i+1}\to F_i\to
  F_{i-1}\to\cdots$ is called \emph{\textbf{F}-totally acyclic} if it
  is acyclic and $\tp{I}{\cx{F}}$ is acyclic for every injective
  $R$-module~$I$.
\end{definition}

\begin{definition}
  \label{def:N-tot.acyc.def}
  Let $X$ be a scheme with structure sheaf $\R$. A complex $\cx{\F} =
  \cdots\to \F_{i+1}\to \F_i\to \F_{i-1}\to \cdots$ of flat
  quasi-coherent sheaves on $X$ is called \emph{\textbf{F}-totally
    acyclic} if for every open affine subset $U\subseteq X$ the
  complex $\cx{\F}(U)$ of flat $\R(U)$-modules is \textbf{F}-totally
  acyclic.
\end{definition}

The next lemma shows, in particular, that \textbf{F}-total acyclicity
is an AD-property.

\begin{proposition}
  \label{prp:N-tot.local}
  Let $\varphi\colon R\to S$ be a flat homomorphism of commutative
  rings.
  \begin{enumerate}
  \item If $\cx{F}$ is an \textbf{F}-totally acyclic complex of flat
    $R$-modules, then $\tp{S}{\cx{F}}$ is an \textbf{F}-totally
    acyclic complex of flat $S$-modules.
  \item If $\varphi$ is faithfully flat and $\cx{F}$ is a complex of
    $R$-modules such that $\tp{S}{\cx{F}}$ is an \textbf{F}-totally
    acyclic complex of flat $S$-modules, then $\cx{F}$ is an
    \textbf{F}-totally acyclic complex of flat $R$-modules.
  \end{enumerate}
\end{proposition}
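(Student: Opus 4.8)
The plan is to reduce everything to two standard facts about flat base change: first, that $-\otimes_R S$ takes flat $R$-modules to flat $S$-modules and, when $\varphi$ is faithfully flat, reflects flatness; and second, the adjunction/flatness isomorphism $\tp{J}{(\tp{S}{\cx{F}})}\is\tp[S]{J}{(\tp{S}{\cx{F}})}$ together with a way of relating injective $S$-modules and injective $R$-modules under tensoring and Hom. For part (1), I would argue as follows. Since $\varphi$ is flat, each $\tp{S}{F_i}$ is a flat $S$-module, and since $S$ is a flat $R$-module the functor $\tp{S}{-}$ is exact, so $\tp{S}{\cx{F}}$ is acyclic. It remains to show $\tp[S]{J}{(\tp{S}{\cx{F}})}$ is acyclic for every injective $S$-module $J$. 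But $\tp[S]{J}{(\tp{S}{\cx{F}})}\is\tp{J}{\cx{F}}$ (by associativity of tensor and the isomorphism $\tp[S]{J}{S}\is J$, or directly because $J$ is an $R$-module via $\varphi$ and $\tp[S]{(\tp{S}{F_i})}{J}\is \tp{F_i}{J}$). Now $J$ need not be injective as an $R$-module, but since $\varphi$ is flat, an injective $S$-module is at least flat-cotorsion-friendly; the right move is to note that any $R$-module embeds into an injective $R$-module and that $\tp{-}{\cx{F}}$ being exact on injectives is inherited by modules of flat dimension zero over such embeddings — more precisely, I would use that $\cx{F}$ being \textbf{F}-totally acyclic means $\tp{I}{\cx{F}}$ is acyclic for all injective $I$, hence (taking a coresolution and using that $\cx{F}$ is a complex of flats, so $\tp{-}{\cx{F}}$ preserves short exact sequences) $\tp{N}{\cx{F}}$ is acyclic for every $R$-module $N$ of injective dimension $\le$ anything; but that is not quite enough either, so instead I invoke the cleaner fact that an injective $S$-module $J$, viewed over $R$, is a direct summand of $\Hom_R(S,E)$ for some injective $R$-module $E$ (take an injective envelope of $J$ over $R$, or use that $J$ is a summand of $\tp[S]{\Hom_R(S,E)}{S}$-type constructions), and $\tp{\Hom_R(S,E)}{\cx{F}}$: here one uses that $S$ is flat over $R$, so $\Hom_R(S,E)$ is injective over $R$, whence $\tp{\Hom_R(S,E)}{\cx{F}}$ is acyclic by hypothesis, and acyclicity passes to the direct summand $\tp[S]{J}{(\tp{S}{\cx{F}})}\is\tp{J}{\cx{F}}$.

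For part (2), I would run the same isomorphisms in reverse, using faithful flatness to descend. Faithful flatness of $\varphi$ gives that $\tp{S}{-}$ is exact and reflects exactness, so from acyclicity of $\tp{S}{\cx{F}}$ we get acyclicity of $\cx{F}$, and from flatness of each $\tp{S}{F_i}$ over $S$ we get flatness of each $F_i$ over $R$ (this is the standard faithfully flat descent of flatness). It remains to show $\tp{I}{\cx{F}}$ is acyclic for every injective $R$-module $I$. Apply $\tp{S}{-}$: since $\varphi$ is faithfully flat, $\tp{I}{\cx{F}}$ is acyclic if and only if $\tp{S}{(\tp{I}{\cx{F}})}$ is acyclic, and $\tp{S}{(\tp{I}{\cx{F}})}\is\tp[S]{(\tp{S}{I})}{(\tp{S}{\cx{F}})}$. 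Now $\tp{S}{I}$ need not be an injective $S$-module, but it has finite (indeed, by flatness, the relevant vanishing) injective dimension over $S$ — actually the clean statement I want is that since $\cx{F}$ being \textbf{F}-totally acyclic over $S$ means $\tp[S]{\cx{F}}{-}$ is acyclic on injective $S$-modules, and $\tp{S}{\cx{F}}$ is a complex of flat $S$-modules so $\tp[S]{(\tp{S}{\cx{F}})}{-}$ preserves short exact sequences, the class of $S$-modules $N$ with $\tp[S]{N}{(\tp{S}{\cx{F}})}$ acyclic is closed under extensions, direct limits, and direct summands; hence it suffices to know $\tp{S}{I}$ is filtered by / embeds nicely relative to injective $S$-modules. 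The most robust route: over any ring, if $M$ is flat and $\tp{M}{C}$ is acyclic for all injective $C$, then $\tp{M}{C}$ is acyclic for all $C$ of finite injective dimension, and in fact for all $C$ in the smallest class containing injectives and closed under the operations above; since $\tp{S}{I}$ admits a (possibly infinite) injective coresolution over $S$ and $\tp{S}{\cx{F}}$ is a bounded-below-in-each-degree complex of flats, a spectral sequence / hyper-Tor argument gives acyclicity of $\tp[S]{(\tp{S}{I})}{(\tp{S}{\cx{F}})}$.

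Rather than lean on a spectral sequence, I would streamline both parts by isolating a single lemma-free observation: for a complex $\cx{G}$ of flat $R$-modules, the class $\mathcal{E}(\cx{G})=\setof{N\in R\text{-Mod}}{\tp{N}{\cx{G}}\text{ is acyclic}}$ is closed under direct limits, extensions, kernels of epimorphisms between its members, and direct summands, because $\tp{-}{\cx{G}}$ is exact on short exact sequences (each $G_i$ flat) and commutes with direct limits. Then \textbf{F}-total acyclicity of $\cx{G}$ says exactly that $\mathcal{E}(\cx{G})$ contains all injectives, hence all modules of finite injective dimension, and — crucially — for part (1) the $R$-module $\Hom_R(S,E)$ is injective (as $S$ is $R$-flat), giving $\tp{\Hom_R(S,E)}{\cx{F}}\in\mathcal{E}(\cx{F})$; combined with the summand $\tp{S}{I'}\hookrightarrow$-type inclusion used above this closes part (1). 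For part (2), the corresponding statement is that $\tp{S}{I}$, over $S$, lies in $\mathcal{E}(\tp{S}{\cx{F}})$: one shows $\tp{S}{I}$ has a finite or transfinite filtration by modules of the form $\tp[S]{(\tp{S}{I})}{-}$... — more simply, since $I$ is injective over $R$ and $\varphi$ is flat, $\tp{S}{I}$ is $\mathrm{Tor}$-independent enough that $\tp[S]{(\tp{S}{I})}{(\tp{S}{\cx{F}})}\is\tp{I}{\cx{F}}\otimes_R S$ is acyclic by faithfully flat descent of the acyclicity of $\tp{I}{\cx{F}}$ — wait, that is circular; so instead: $\tp[S]{(\tp{S}{I})}{(\tp{S}{\cx{F}})}\is\tp{S}{(\tp{I}{\cx{F}})}$ by associativity, and we must show the left side is acyclic knowing only that $\tp[S]{J}{(\tp{S}{\cx{F}})}$ is acyclic for injective $S$-modules $J$; for this, embed $I$ into an injective $R$-module — it already is one — and use that $\tp{S}{I}$ embeds as a pure (hence, over the flat complex $\tp{S}{\cx{F}}$, $\mathcal{E}$-preserving) submodule of an injective $S$-module, namely its $S$-injective envelope, with cokernel again handled by dévissage within $\mathcal{E}(\tp{S}{\cx{F}})$. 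The main obstacle, and the place I expect the real content to sit, is precisely this last point: controlling the injective dimension (or a suitable filtration/dévissage) of $\tp{S}{I}$ over $S$ for a general flat homomorphism $\varphi$, where $\tp{S}{I}$ is typically not injective; the cleanest fix may be to observe that it suffices to check acyclicity of $\tp[S]{-}{(\tp{S}{\cx{F}})}$ on a cogenerating set of injective $S$-modules together with closure of $\mathcal{E}$ under the operations above, and that $\tp{S}{I}$ embeds in such a cogenerator with $\mathcal{E}$-controlled quotient, which reduces part (2) to part (1)'s bookkeeping applied over $S$.
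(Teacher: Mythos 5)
Your part (1) works, but it rests on a false side claim and is more roundabout than necessary: when $S$ is flat over $R$, an injective $S$-module $J$ \emph{is} injective as an $R$-module, since $\Hom_R(-,J)\is\Hom_S(\tp{S}{-},J)$ is a composite of exact functors; so $\tp[S]{J}{(\tp{S}{\cx{F}})}\is\tp{J}{\cx{F}}$ is acyclic at once (this is the paper's one-line argument). Your workaround---realizing $J$ as an $R$-direct summand of $\Hom_R(S,E)$ with $E$ an injective $R$-module, and noting that $\Hom_R(S,E)$ is $R$-injective because $S$ is $R$-flat---is itself correct (and in fact shows $J$ is $R$-injective, contradicting your parenthetical), so part (1) stands.

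Part (2) has a genuine gap. Your strategy is to verify, by faithfully flat descent, that $\tp{S}{(\tp{I}{\cx{F}})}\is\tp[S]{(\tp{S}{I})}{(\tp{S}{\cx{F}})}$ is acyclic; but for a general flat $\varphi$ the $S$-module $\tp{S}{I}$ is neither injective nor of finite injective dimension, and none of your proposed repairs is actually established: the embedding of $\tp{S}{I}$ into its $S$-injective envelope need not be pure, and even a pure embedding into an injective would only give a degreewise-injective map of complexes---acyclicity does not pass to subcomplexes without controlling the quotient, and your ``d\'evissage'' of the cokernel is pure assertion, since that cokernel is an arbitrary $S$-module about which nothing is known. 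You even flag this point as ``the place I expect the real content to sit,'' and it is: the missing idea is to go in the opposite direction, via the co-induced module rather than the extended one. The $S$-module $\Hom_R(S,I)$ \emph{is} injective, so $\tp{\Hom_R(S,I)}{\cx{F}}\is\tp[S]{\Hom_R(S,I)}{(\tp{S}{\cx{F}})}$ is acyclic by hypothesis; and because $\varphi$ is faithfully flat the exact sequence $0\to R\to S\to S/R\to 0$ is pure, so applying $\Hom_R(-,I)$ yields a split exact sequence exhibiting $I\is\Hom_R(R,I)$ as an $R$-direct summand of $\Hom_R(S,I)$. Hence $\tp{I}{\cx{F}}$ is a direct summand of an acyclic complex and therefore acyclic. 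Note that you already had this tool in hand: the module $\Hom_R(S,E)$ you invoked in part (1), where it was unnecessary, is exactly what closes part (2).
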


\begin{proof}
  (1) Since $\varphi$ is flat and $\cx{F}$ is an acyclic complex of
  flat $R$-modules, it follows that $\tp{S}{\cx{F}}$ is an acyclic
  complex of flat $S$-modules. Now, let $E$ be an injective
  $S$-module, by flatness of $\varphi$ it is also injective as an
  $R$-module. Indeed, there are isomorphisms $\Hom_R(-,E) \is
  \Hom_R(-,\Hom_S(S,E)) \is \Hom_S( S\otimes_R -, E)$. It follows that
  $\tp[S]{E}{(\tp{S}{\cx{F}})} \is \tp{E}{\cx{F}}$ is acyclic.

  (2) Since $\varphi$ is faithfully flat and $\tp{S}{\cx{F}}$ is an
  \textbf{F}-totally acyclic complex of flat $S$-modules, it follows
  that $\cx{F}$ is an acyclic complex of flat $R$-modules. Let $I$ be
  an injective $R$-module; it must be shown that $\tp{I}{\cx{F}}$ is
  acyclic. The $S$-module $\Hom_R(S,I)$ is injective, so it follows
  from the next chain of isomorphisms that $\tp{\Hom_R(S,I)}{\cx{F}}$
  is acyclic:
  \begin{equation*}
    \tp{\Hom_R(S,I)}{\cx{F}} \dis \tp{(\tp[S]{\Hom_R(S,I)}{S})}{\cx{F}} 
    \dis \tp[S]{\Hom_R(S,I)}{(\tp{S}{\cx{F}})}\:;
  \end{equation*}
  here the last complex is acyclic by the assumption that
  $\tp{S}{\cx{F}}$ is \textbf{F}-totally acyclic.

  As $\varphi$ is faithfully flat, the exact sequence of $R$-modules
  $0\to R\to S\to S/R\to 0$ is pure. Hence the induced sequence
  \begin{equation*}
    0\lra \Hom_R(S/R,I)\lra \Hom_R(S,I)\lra \Hom_R(R,I)\lra 0
  \end{equation*}
  is split exact. It follows that $\Hom_R(R,I)\is I$ is a direct
  summand of $\Hom_R(S,I)$ as an $R$-module. Hence the complex
  $\tp{I}{\cx{F}}$ is a direct summand of the acyclic complex
  $\tp{\Hom_R(S,I)}{\cx{F}}$ and, therefore, acyclic.
\end{proof}

The proposition above together with (the complex version of)
Lemma~\ref{faithflat} ensure that the property of being
\textbf{F}-totally acyclic is Zariski-local as a property of complexes
of quasi-coherent sheaves.

\begin{corollary}
  \label{N-tot.acyc.def}
  Let $X$ be a scheme with structure sheaf $\R$. A complex $\cx{\F}$
  of flat quasi-coherent sheaves on $X$ is \textbf{F}-totally acyclic
  if there exists an affine open covering $\calU$ of $X$ such that the
  complex $\cx{\F}(U)$ of flat $\R(U)$-modules is \textbf{F}-totally
  acyclic for every $U\in \calU$.\qed
\end{corollary}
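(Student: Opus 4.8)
The plan is to deduce this corollary directly from the combination of Proposition~\ref{prp:N-tot.local} and the complex version of Lemma~\ref{faithflat}, exactly as the sentence preceding the statement announces. The only thing one has to check is that the hypotheses of Lemma~\ref{faithflat} are met, i.e.\ that the property $\fP$ ``being an \textbf{F}-totally acyclic complex of flat modules'' is an AD-property in the class $\calR = \setof{\R(U)}{U\subseteq X\text{ open affine}}$ and that it is compatible with finite products. Once that is in hand, Lemma~\ref{faithflat} (in its complex version) says precisely that $\fP$ is Zariski-local, which unwinds to the asserted equivalence; and Definition~\ref{def:N-tot.acyc.def} is the ``locally $\fP$'' side of that equivalence, while the displayed condition in the corollary is the ``exists an affine open covering'' side.

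First I would invoke Proposition~\ref{prp:N-tot.local}: part~(1) is exactly the ascent statement (for flat, in particular flat epi, homomorphisms $R\to S$ between rings in $\calR$), and part~(2) is exactly the descent statement (for faithfully flat $R\to S$). So $\fP$ is an AD-property in $\calR$. Second, I would check compatibility with finite products. Here one uses that a module over $R_1\times R_2$ decomposes canonically as $M_1\times M_2$ with $M_i$ an $R_i$-module, that $M_1\times M_2$ is flat over $R_1\times R_2$ iff each $M_i$ is flat over $R_i$, that a sequence of $R_1\times R_2$-modules is exact iff both component sequences are, that every injective $R_1\times R_2$-module is of the form $I_1\times I_2$ with $I_i$ injective over $R_i$, and that $(I_1\times I_2)\otimes_{R_1\times R_2}(M_{1,\bullet}\times M_{2,\bullet})\cong (I_1\otimes_{R_1}M_{1,\bullet})\times(I_2\otimes_{R_2}M_{2,\bullet})$; combining these gives that $M_{1,\bullet}\times M_{2,\bullet}$ is \textbf{F}-totally acyclic iff each $M_{i,\bullet}$ is. All of this is routine and can be dispatched in a line or two. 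Third, I would apply the complex version of Lemma~\ref{faithflat} to conclude Zariski-localness, and then read off the corollary.

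The ``main obstacle'' is really a non-obstacle here: all the substantive content has already been proved in Proposition~\ref{prp:N-tot.local}. The only mildly delicate point is the bookkeeping in the finite-products step—specifically, being careful that injectives over a finite product ring split as products of injectives, and that tensoring commutes with the product decomposition—but this is standard commutative algebra. So the proof will be short: cite Proposition~\ref{prp:N-tot.local} for the AD-property, note compatibility with finite products, and invoke (the complex version of) Lemma~\ref{faithflat}. One could even phrase the whole argument as a single sentence, as the text already does in the remark immediately preceding the statement; the corollary's ``proof'' can therefore reasonably be left to the reader with a pointer to Proposition~\ref{prp:N-tot.local} and Lemma~\ref{faithflat}, which is presumably why the excerpt ends the statement with \verb|\qed|.
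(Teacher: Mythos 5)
Your proposal is correct and follows essentially the same route as the paper: the text deduces Corollary~\ref{N-tot.acyc.def} directly from Proposition~\ref{prp:N-tot.local} together with the complex version of Lemma~\ref{faithflat}, which is why the statement carries a \verb|\qed| with no separate proof. Your extra check that \textbf{F}-total acyclicity is compatible with finite products is the only hypothesis of Lemma~\ref{faithflat} the paper leaves tacit, and your sketch of it is accurate.
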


\begin{remark}
  Our definition \ref{def:N-tot.acyc.def} is different from
  \cite[Definition 4.1]{MS}, but as shown in \cite[Lemma 4.5]{MS} the
  two are equivalent if $X$ is Noetherian and semi-separated, which is
  the blanket assumption in \cite{MS}. In the next proposition we
  substantially relax the hypothesis on $X$ and show that our
  definition coincides with the one from \cite{MS} if $X$ is
  quasi-compact and semi-separated.
\end{remark}

\begin{proposition}
  \label{prp:equiv_qcss}
  Let $X$ be a quasi-compact and semi-separated scheme and let
  $\cx{\F}$ be an acyclic complex of flat quasi-coherent sheaves on $X$.
  Conditions $(i)$ and $(ii)$ below are equivalent and imply $(iii)$.
  \begin{enumerate}
  \item[($i$)] The complex $\J\otimes_{\R}\cx{\F}$ is acyclic for
    every injective object $\J$ in $\Qcoh(X)$.
  \item[($ii$)] There exists a semi-separating open affine covering
    $\calU$ of $X$ such that for every $U\in \calU$, the complex
    $\cx{\F}(U)$ of flat $\R(U)$-modules is \textbf{F}-totally
    acyclic.
  \item[($iii$)] For every $x\in X$ the complex $(\cx{\F})_x$ of flat
    $\calO_{X,x}$-modules is \textbf{F}-totally acyclic.
  \end{enumerate}
\end{proposition}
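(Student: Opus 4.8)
The plan is to first establish the equivalence of $(i)$ and $(ii)$, and then derive $(iii)$ from $(ii)$. For the equivalence of $(i)$ and $(ii)$, I would exploit the fact that over a quasi-compact semi-separated scheme there is a \emph{finite} semi-separating open affine covering $\calU = \{U_1,\dots,U_n\}$; semi-separatedness means each pairwise intersection $U_i \cap U_j$ is again affine, and more generally all finite intersections are affine. The key categorical input is the description of injective quasi-coherent sheaves on such schemes: if $j\colon U \hookrightarrow X$ is the inclusion of an open affine subset, then the direct image $j_*$ of an injective $\R(U)$-module (viewed as a quasi-coherent sheaf on $U$) is injective in $\Qcoh(X)$, and every injective object of $\Qcoh(X)$ is a direct summand of a finite product $\prod_i (j_i)_* \J_i$ of such sheaves built from the covering $\calU$ (this uses quasi-compactness to get finiteness, and is the standard structure theory going back to work cited around \cite{MS}). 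Since tensoring commutes with finite products and passes through direct summands while preserving and reflecting acyclicity appropriately, condition $(i)$ reduces to the acyclicity of $\J\otimes_\R \cx{\F}$ for $\J = j_*(I)$ with $I$ an injective $\R(U)$-module and $U\in\calU$.

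The next step is the local computation: for $j\colon U\hookrightarrow X$ open affine and $I$ an injective $\R(U)$-module, one has a natural isomorphism of complexes $(j_*I)\otimes_\R \cx{\F} \cong j_*\!\big(I\otimes_{\R(U)}\cx{\F}(U)\big)$, because $\cx{\F}$ is a complex of flat sheaves and $j_*$ from an affine open is exact on quasi-coherent sheaves. Evaluating sections on any open affine $V$, and using semi-separatedness to ensure $U\cap V$ is affine (so that sheaf cohomology vanishes and sections behave well), the acyclicity of this complex is equivalent to the acyclicity of $I\otimes_{\R(U)}\cx{\F}(U)$ as a complex of $\R(U)$-modules. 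Running this over all $U\in\calU$ and all injective $I$ gives exactly the statement that each $\cx{\F}(U)$ is \textbf{F}-totally acyclic, i.e.\ condition $(ii)$ — but here I must be slightly careful about which direction needs which hypothesis: getting $(ii)\Rightarrow(i)$ only needs the structure of injectives plus the local isomorphism, while $(i)\Rightarrow(ii)$ additionally needs that restricting the \emph{global} vanishing in $(i)$ back down detects the \textbf{F}-total acyclicity locally, which is where the exactness of $j_*$ and the affineness of intersections is essential. By Corollary~\ref{N-tot.acyc.def}, once $(ii)$ holds for one semi-separating covering it holds for all open affine subsets, so there is no ambiguity in the resulting notion.

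Finally, for $(ii)\Rightarrow(iii)$: fix $x\in X$ and choose $U\in\calU$ with $x\in U$. Then $\calO_{X,x} = \R(U)_{\mathfrak p}$ for the prime $\mathfrak p$ corresponding to $x$, and the stalk $(\cx{\F})_x$ is the localization $\cx{\F}(U)_{\mathfrak p} = \R(U)_{\mathfrak p}\otimes_{\R(U)}\cx{\F}(U)$. Since localization $\R(U)\to\R(U)_{\mathfrak p}$ is flat, part $(1)$ of Proposition~\ref{prp:N-tot.local} applies directly to conclude that $(\cx{\F})_x$ is an \textbf{F}-totally acyclic complex of flat $\calO_{X,x}$-modules. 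I expect the main obstacle to be the careful bookkeeping in the $(i)\Leftrightarrow(ii)$ step: assembling the precise statement that injectives in $\Qcoh(X)$ over a quasi-compact semi-separated scheme are, up to summands, finite products of pushforwards of injective modules along the covering inclusions, and verifying that the local-to-global tensor–pushforward isomorphism detects acyclicity in both directions. Everything beyond that is either formal (summands and finite products) or a direct appeal to the already-proven ascent statement of Proposition~\ref{prp:N-tot.local}(1).
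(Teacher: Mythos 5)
Your proposal is correct and, for the main equivalence $(i)\Leftrightarrow(ii)$, it follows essentially the same route as the paper: identify injective objects of $\Qcoh(X)$ as direct summands of finite products $\prod_{U\in\calU}j^U_*(\widetilde{E_U})$ of pushforwards of injective $\R(U)$-modules along a finite semi-separating covering, use that $j^U_*$ preserves injectives to get $(i)\Rightarrow(ii)$, and reduce $(ii)\Rightarrow(i)$ to the acyclicity of $j^U_*(\widetilde{E_U})\otimes_{\R}\cx{\F}$, checked locally. The only substantive divergence is the implication to $(iii)$: the paper deduces it from $(i)$, by pushing forward an injective $\calO_{X,x}$-module (injective over $\R(U)$ since $\calO_{X,x}$ is a localization of $\R(U)$) and taking stalks, whereas you deduce it from $(ii)$ by applying the already-proven ascent statement, Proposition~\ref{prp:N-tot.local}(1), to the flat localization $\R(U)\to\calO_{X,x}$ together with $(\cx{\F})_x\cong\calO_{X,x}\otimes_{\R(U)}\cx{\F}(U)$; since $(i)\Leftrightarrow(ii)$ is already established, this is a perfectly valid and arguably more economical route. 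One small correction of emphasis: your isomorphism $(j_*\widetilde{I})\otimes_{\R}\cx{\F}\cong j_*(\widetilde{I}\otimes_{\calO_U}j^*\cx{\F})$ does not rest on flatness of $\cx{\F}$ or on exactness of $j_*$ as such, but on the fact that $j$ is an \emph{affine} morphism (this is exactly where semi-separatedness enters), so that the projection formula can be verified sectionwise on affine opens $V$ with $U\cap V$ affine --- which is precisely the chain of section-level isomorphisms the paper writes out over the members $W\in\calU$ of the covering, acyclicity being local. With that justification supplied, your plan fills in to a complete proof.
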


\begin{proof}
  Let $\calU$ be a semi-separating open affine covering of $X$. For
  every $U\in \calU$, the inclusion $U\to X$ gives an adjoint pair
  $(j^*_U,j_*^U)$, where $j^*_U\colon\Qcoh(X)\to \Qcoh(U)$ and
  $j_*^U:\Qcoh(U)\to \Qcoh(X)$ are the inverse and direct image functor
  respectively.  Since $j^*_U$ is an exact functor and $j_*^U$ is a
  right adjoint of $j^*_U$, it follows that $j_*^U$ preserves
  injective objects. Now the implication $(i)\Rightarrow (ii)$
  follows.

  $(ii)\Rightarrow (i)$: Let $\calU$ be a semi-separating affine
  covering of $X$, such that $\cx{\F}(U)$ is \textbf{F}-totally
  acyclic for every $U\in \calU$. Without loss of generality, assume
  that $\calU$ is finite. Given a quasi-coherent sheaf $\J$ there
  exists, since the scheme is semi-separated, a monomorphism
  \begin{equation*}
    0 \lra \J \lra \prod_{U\in\calU} j^U_*(\widetilde{E_U})\:,
  \end{equation*}
  where $E_U$ denotes the injective hull of $\J(U)$ in the category of
  $\R(U)$-modules, and $\widetilde{E_U} \in \Qcoh(U)$ is the
  corresponding sheaf. Recall that each quasi-coherent sheaf
  $j^U_*(\widetilde{E_U})$ is injective per the argument above. We
  assume that $\J$ is injective in $\Qcoh(X)$, so it is a direct
  summand of the finite product $\prod_{\calU}
  j^U_*(\widetilde{E_U})$. It is thus sufficient to prove that
  $j^U_*(\widetilde{E_U})\otimes_{\R} \cx{\F} $ is acyclic for every
  $U\in\calU$, as that will imply that $\J \otimes_{\R}\cx{\F}$ is
  a direct summand of an acyclic complex and hence acyclic. Fix $U\in
  \calU$; for every $W \in \calU$ there are isomorphisms
  \begin{align*}
    (j^U_*(\widetilde{E_U})\otimes_{\R}\cx{\F} )(W) &\cong 
    j^U_*(\widetilde{E_U})(W)\otimes_{\R(W)} \cx{\F}(W) \\ 
    &= (E_U\otimes_{\R(U)}\R(U\cap W))\otimes_{\R(W)}\cx{\F}(W) \\
    &\cong  E_U \otimes_{\R(U)}(\R(U\cap W)\otimes_{\R(W)}\cx{\F}(W))  \\
    &\cong E_U\otimes_{\R(U)} \cx{\F}(U\cap W)\\
    &\cong   E_U\otimes_{\R(U)}( \R(W\cap U)\otimes_{\R(U)}\cx{\F}(U))\\
    &\cong (E_U \otimes_{\R(U)} \R(W\cap U))\otimes_{\R(U)}\cx{\F}(U) \\
    &\cong (\R(W\cap U) \otimes_{\R(U)}E_U )\otimes_{\R(U)}\cx{\F}(U) \\
    &\cong \R(W\cap U) \otimes_{\R(U)}(E_U\otimes_{\R(U)} \cx{\F}(U)) \:.
  \end{align*}
  The last complex is acyclic as $\R(W\cap U)$ is a flat
  $\R(U)$-module and the complex $E_U\otimes_{\R(U)} \cx{\F}(U)$ is
  acyclic by the assumption that $\cx{\F}(U)$ is \textbf{F}-totally
  acyclic.

  $(i) \Rightarrow (iii)$: Given an $x\in X$ consider a subset $U\in
  \calU$ with $x\in U$. Let $E$ be an injective $\calO_{X,x}$-module;
  it is injective over $\R(U)$ as well as one has $\calO_{X,x}\cong
  (\R(U))_x$. By $(i)$ the complex
  $j_*^U(\widetilde{E})\otimes_{\R}\cx{\F}$ is acyclic, and hence so
  is
  \begin{equation*}
    (j_*^U(\widetilde{E})\otimes_{\R}\cx{\F})_x\cong
    (E\otimes_{\calO_{X,x}}\cx{\F})_x\:.
  \end{equation*}
  Thus, the $\calO_{X,x}$-complex $(\cx{\F})_x$ is
  $\textbf{F}$-totally acyclic.
\end{proof}

If the scheme is noetherian and semi-separated, then all three
conditions in Proposition \ref{prp:equiv_qcss} are equivalent; see
\cite[Lemma 4.4]{MS} for the remaining implication.

\section{Total acyclicity vs.\ \textbf{F}-total acyclicity}
\label{sec:totNtot}

\noindent
Let $X$ be a scheme with structure sheaf $\R$.  Recall from Drinfeld
\cite[Section 2]{D} that a not necessarily finite dimensional vector
bundle on $X$ is a quasi-coherent sheaf $\P$ such that the
$\R(U)$-module $\P(U)$ is projective for every open affine subset $U
\subseteq X$; i.e.\ it is locally projective per Definition
\ref{localprop}. This is a Zariski-local notion because projectivity
of modules is an AD-property and compatible with finite products; see
\cite{P}.  We take a special interest in \textbf{F}-totally acyclic
complexes of infinite dimensional vector bundles; to this end we
recall:

\begin{definition}
  \label{dfn:tot}
  A complex $\cx{P} = \cdots\to P_{i+1}\to P_i\to P_{i-1}\to \cdots$
  of projective $R$-modules is called \emph{totally acyclic} if it is
  acyclic, and $\Hom(\cx{P},Q)$ is acyclic for every projective
  $R$-module $Q$.
\end{definition}

As opposed to \textbf{F}-total acyclicity, it is not clear to us that
total acyclicity leads to a Zariski-local property of complexes of
quasi-coherent sheaves on an arbitrary scheme. The purpose of this
section is to identify conditions on schemes that ensure that total
acyclicity coincides with \textbf{F}-total acyclicity for complexes of
vector bundles.


We need a few definitions parallel to those in Section~\ref{sec:Ntot}.

\begin{definition}
  \label{localprop-r}
  Let $\fP$ be a property of commutative rings.

  (1) A scheme $X$ with structure sheaf $\R$ is said to \emph{locally}
  have property $\fP$ if for every open affine subset $U\subseteq X$
  the ring $\R(U)$ has property $\fP$.

  (2) As a (local) property of schemes, the property $\fP$ is called
  \emph{Zariski-local} if the following conditions are equivalent for
  every scheme~$X$.
  \begin{itemize}
  \item $X$ locally has property $\fP$.
  \item There exists an open affine covering $\calU$ of $X$ such that
    for every $U \in\calU$ the ring $\R(U)$ has property $\fP$.
  \end{itemize}
\end{definition}

Recall that a commutative ring $R$ is called \emph{$d$-perfect} if
every flat $R$-module has projective dimension at most $d$. Bass
\cite[Theorem~P]{HBs60} described the $0$-perfect rings.

\begin{example}
  Every locally Noetherian scheme of Krull dimension $d$ is locally
  coherent and locally $d$-perfect.

  If $R$ is a commutative coherent ring of global dimension $2$, then
  the polynomial ring in $n$ variables over $R$ is coherent of global
  dimension $n+2$; see Glaz \cite[Theorem 7.3.14]{ccr}. Thus, the
  scheme $\mathbb{P}^n_R$ is locally coherent and locally
  $(n+2)$-perfect.
\end{example}

\begin{definition}
  \label{ad-property-ring}
  Let $\fP$ a property of commutative rings and let $\calR$ be a class
  of commutative rings.
  \begin{enumerate}
  \item $\fP$ is said to \emph{ascend} in $\calR$, if for every flat
    epimorphism $R\to S$ of rings in $\calR$, the ring $S$ has
    property $\fP$ if $R$ has property $\fP$.
  \item $\fP$ is said to \emph{descend} in $\calR$ if for every
    faithfully flat homomorphism $R\to S$ of rings in $\calR$, the
    ring $S$ has property $\fP$ only if $R$ has property $\fP$.
  \end{enumerate}

  If $\fP$ ascends and descends in $\calR$, then it is called an
  \emph{ascent--descent property}, for short an \emph{AD-property}, in
  $\calR$.
\end{definition}

\begin{definition}
  A property $\fP$ of commutative rings is said to be \emph{compatible
    with finite products} if for all commutative rings $R_1$ and
  $R_2$, the product ring $R_1\times R_2$ has property $\fP$ if and
  only if $R_1$ and $R_2$ have property $\fP$.
\end{definition}

The proof of the next lemma is parallel to that of
Lemma~\ref{faithflat}.

\begin{lemma}
  \label{faithflat-ring}
  Let $X$ be a scheme with structure sheaf $\R$ and let $\fP$ be a
  property of commutative rings. If $\fP$ is compatible with finite
  products and an AD-property in the class of commutative rings
  $\setof{\R(U)}{U \subseteq X \text{ is an open affine subset}}$,
  then $\fP$ as a property of schemes is Zariski-local. \qed
\end{lemma}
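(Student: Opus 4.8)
The plan is to imitate the proof of Lemma~\ref{faithflat}, replacing module statements by ring statements throughout. Let $\calU = \setof{U_i}{i \in I}$ be an open affine covering of $X$ such that $\R(U_i)$ has property $\fP$ for every $i \in I$, and let $U$ be an arbitrary open affine subset of $X$; we must show $\R(U)$ has property $\fP$. First I would invoke \cite[Lemma 25.11.5]{SP} to obtain a finite standard open covering $U = \bigcup_{j=1}^n D(f_j)$ such that each $D(f_j)$ is a standard open subset of some $U_j \in \calU$, i.e.\ $f_j \in \R(U_j)$ and $\R(D(f_j)) \is \R(U_j)_{f_j}$. Since the localization map $\R(U_j) \to \R(U_j)_{f_j}$ is a flat epimorphism of rings in $\calR$ and $\R(U_j)$ has property $\fP$, ascent of $\fP$ in $\calR$ gives that $\R(D(f_j))$ has property $\fP$ for every $j$.

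Next, compatibility of $\fP$ with finite products yields that the product ring $\prod_{j=1}^n \R(D(f_j))$ has property $\fP$. Finally, the canonical ring homomorphism $\R(U) \to \prod_{j=1}^n \R(D(f_j))$ is faithfully flat---the $D(f_j)$ cover $U$, so the induced map on spectra is surjective, and each factor is a flat (indeed localization) $\R(U)$-algebra---so descent of $\fP$ in $\calR$ forces $\R(U)$ to have property $\fP$. This establishes that $X$ locally has property $\fP$, which is the implication that needs proof; the converse is trivial. Hence $\fP$ as a property of schemes is Zariski-local.

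Since the paper explicitly states ``The proof of the next lemma is parallel to that of Lemma~\ref{faithflat},'' the natural presentation is simply to cite that parallel and omit the routine repetition, which is what the \verb|\qed| after the statement already signals. There is no genuine obstacle here: the only point requiring the slightest care is verifying that $\R(U) \to \prod_{j=1}^n \R(D(f_j))$ is faithfully flat, but this is standard (each $\R(D(f_j))$ is a flat $\R(U)$-algebra and the $D(f_j)$ jointly cover $\Spec \R(U)$), and it is exactly the same verification used in Lemma~\ref{faithflat}. Accordingly I would not write out a full proof; the statement carries a \verb|\qed| precisely because the argument is a verbatim transcription of the earlier one with ``$\R(U)$-module $\M(U)$ has property $\fP$'' replaced everywhere by ``ring $\R(U)$ has property $\fP$.''
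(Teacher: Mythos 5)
Your argument is correct and is exactly the transposition of the proof of Lemma~\ref{faithflat} that the paper intends by the remark ``the proof is parallel'' and the terminal \verb|\qed|: standard opens via \cite[Lemma 25.11.5]{SP}, ascent along the flat epimorphisms $\R(U_j)\to\R(U_j)_{f_j}$, compatibility with finite products, and descent along the faithfully flat map $\R(U)\to\prod_{j=1}^n\R(D(f_j))$. Nothing differs from the paper's route, so there is nothing further to add.
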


\begin{proposition}
  The properties local coherence and local $d$-perfectness of schemes
  are Zariski-local.
\end{proposition}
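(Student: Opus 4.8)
The plan is to derive the statement from Lemma~\ref{faithflat-ring}, so that it suffices to prove that coherence and, for each fixed $d$, $d$-perfectness are compatible with finite products and are AD-properties in the class of all commutative rings (which a fortiori covers the affine sections of any given scheme). Compatibility with finite products is clear for both: over $R_1\times R_2$ the module category is the product of those of $R_1$ and $R_2$, flatness is tested componentwise, and a finitely generated ideal---respectively, a projective resolution---of a module $M_1\times M_2$ decomposes into a pair, so $R_1\times R_2$ is coherent, respectively $d$-perfect, precisely when both $R_i$ are.

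For ascent along a flat epimorphism $\f\colon R\to S$ I would use the standard facts about ring epimorphisms that $\tp{S}{S}\to S$ is an isomorphism---hence $\tp{S}{M}\is M$ for every $S$-module $M$---and consequently $M\otimes_R N\is M\otimes_S N$ for all $S$-modules $M$ and $N$. For coherence, apply Chase's criterion (a commutative ring is coherent if and only if every product of flat modules over it is flat): since $S$ is flat over $R$ and $R$ is coherent, the module $\prod_I S$ is flat over $R$ for every set $I$, and because tensoring over $S$ with $\prod_I S$ agrees, on $S$-modules, with tensoring over $R$, the module $\prod_I S$ is then flat over $S$, whence $S$ is coherent. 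For $d$-perfectness, a flat $S$-module $F$ is flat over $R$ by restriction, so $\pd{F}\le d$; applying $\tp{S}{-}$ to a projective $R$-resolution of $F$ of length $\le d$ yields, using $\tp{S}{F}\is F$, a projective $S$-resolution of $F$ of length $\le d$, so $\pd[S]{F}\le d$.

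For descent along a faithfully flat $\f\colon R\to S$, the coherence case is elementary: given a finitely generated ideal $J=(a_1,\dots,a_n)\subseteq R$, put $K=\ker(R^n\to R)$; flatness of $\f$ gives $\tp{S}{K}\is\ker(S^n\to S)$, which is finitely generated over $S$ since $S$ is coherent, and finite generation descends along faithfully flat maps (a finitely generated $K_0\subseteq K$ with $\tp{S}{K_0}\to\tp{S}{K}$ onto satisfies $\tp{S}{(K/K_0)}=0$, hence $K=K_0$), so $J$ is finitely presented. The $d$-perfectness case carries the real content: let $F$ be a flat $R$-module and $K$ a $d$-th syzygy of $F$ (with $K=F$ when $d=0$), so that $K$ is flat and $\pd{F}\le d$ holds if and only if $K$ is projective. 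Base change along $\f$ exhibits $\tp{S}{K}$ as a $d$-th syzygy of the flat $S$-module $\tp{S}{F}$, hence $\tp{S}{K}$ is projective over $S$ because $S$ is $d$-perfect; since projectivity of modules descends along faithfully flat homomorphisms---the descent half of the AD-property of projectivity recalled above from \cite{P}---the module $K$ is projective over $R$, and therefore $\pd{F}\le d$.

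I expect every step except the last to be routine; the only substantive ingredient is faithfully flat descent of projectivity, and that is the crux of the $d$-perfectness case---with it in hand, the bound $\operatorname{pd}\le d$ for flat modules descends from $S$ to $R$.
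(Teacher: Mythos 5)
Your proof is correct and follows essentially the same route as the paper: reduce to Lemma~\ref{faithflat-ring}, prove ascent of coherence along a flat epimorphism via Chase's criterion together with $S\otimes_R S\cong S$, and handle ascent and descent of $d$-perfectness by base-changing a length-$d$ projective resolution and invoking faithfully flat descent of projectivity from \cite{P}. The only deviation is minor: where the paper cites Harris \cite[Corollary 2.1]{Harris} for descent of coherence, you give a short direct argument (descent of finite generation of the kernel of $R^n\to R$ via faithful flatness), which is equally valid.
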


\begin{proof}
  Coherence and $d$-perfectness are properties of rings that are
  compatible with finite products, so by Lemma~\ref{faithflat-ring} it
  is enough to prove that they are AD-properties.

  Harris \cite[Corollary 2.1]{Harris} proves that coherence descends
  along faithfully flat homomorphisms of rings. To prove ascent, let
  $R \to S$ be a flat epimorphism and assume that $R$ is coherent. Let
  $\setof{F_i}{i\in I}$ be a family of flat $S$-modules. As $S$ is
  flat over $R$, every flat $S$-module is a flat $R$-module. Since $R$
  is coherent the $R$-module $\prod_{i\in I}F_i$ is flat, and as
  flatness ascends so is the $S$-module $\tp{S}{\prod_{i\in
      I}F_i}$. There are isomorphisms of $S$-modules
  \begin{equation*}
    \tp{S}{\prod_{i\in I}F_i} \dis 
    \tp{S}{\prod_{i\in I}(\tp[S]{S}{F_i})} \dis 
    \tp[S]{(\tp{S}{S})}{\prod_{i\in I}F_i} \dis
    \prod_{i\in I}F_i\:,
  \end{equation*}
  where the last isomorphism holds as $R\to S$ is an epimorphism of
  rings. Thus, $\prod_{i\in I}F_i,$ is a flat $S$-module, and it
  follows that $S$ is coherent; see \cite[Theorem 2.3.2]{ccr}.

  To see that $d$-perfectness ascends, let $R\to S$ be a flat
  epimorphism and assume that $R$ is $d$-perfect. Let $F$ be a flat
  $S$-module; it is also flat over $R$, so there is an exact sequence
  of $R$-modules $0\to P_d\to \cdots\to P_1\to P_0\to F\to 0$ with
  each $P_i$ projective. As $S$ is flat over $R$, it induces an exact
  sequence of $S$-modules
  \begin{equation*}
    0 \lra \tp{S}{P_d}
    \lra \cdots \lra \tp{S}{P_1} \lra \tp{S}{P_0} \lra
    \tp{S}{F} \lra 0\:.
  \end{equation*}
  Each $S$-module $\tp{S}{P_i}$ is projective, so one has
  $\pd[S]{(\tp{S}{F})} \le d$. Finally, as $R\to S$ is an epimorphism
  one has $\tp{S}{F} \is F$, so $d$-perfectness ascends. To prove
  descent, let $R\to S$ be a faithfully flat ring homomorphism where
  $S$ is $d$-perfect. Let $F$ be a flat $R$-module, and consider a
  projective resolution $\cdots \to P_1\to P_0\to F\to 0$ over $R$. As
  above it yields a projective resolution over $S$,
  \begin{equation*}
    \cdots \lra \tp{S}{P_d} \lra \cdots \lra
    \tp{S}{P_1} \lra \tp{S}{P_0} \lra \tp{S}{F} \lra 0\:.
  \end{equation*}
  The inequality $\pd[S]{(\tp{S}{F})} \le d$ implies that the
  $S$-module
  \begin{equation*}
    \Coker{(\tp{S}{P_{d+1}} \lra \tp{S}{P_d})} \dis \tp{S}{\Coker{(P_{d+1}\to P_d)}}
  \end{equation*}
  is projective, and it follows from \cite{P} that the $R$-module
  $\Coker{(P_{d+1}\to P_d)}$ is projective, whence one has $\pd{F} \le
  d$.
\end{proof}

\begin{theorem}
  \label{totac.is.Ntotac}
  Let $X$ be locally coherent and locally $d$-perfect scheme, and let
  $\cx{\P}$ be a complex of vector bundles on $X$. If there exists an
  open affine covering $\calU$ of $X$ such that $\cx{\P}(U)$ is
  totally acyclic in $\mathrm{Ch}(\mathscr{R}(U))$ for every $U \in
  \calU$, then $\cx{\P}$ is \textbf{F}-totally acyclic.
\end{theorem}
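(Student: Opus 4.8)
The plan is to reduce the claim to a statement about complexes of flat modules over a single open affine and then invoke Zariski-localness of \textbf{F}-total acyclicity (Corollary~\ref{N-tot.acyc.def}). Since $\cx{\P}$ is a complex of vector bundles, each $\cx{\P}(U)$ is a complex of projective $\mathscr{R}(U)$-modules, in particular of flat $\mathscr{R}(U)$-modules; so it suffices to prove that for every $U\in\calU$ the complex $\cx{\P}(U)$ is \textbf{F}-totally acyclic \emph{as a complex of flat $\mathscr{R}(U)$-modules}. By Corollary~\ref{N-tot.acyc.def}, this gives \textbf{F}-total acyclicity of the sheaf $\cx{\P}$. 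Thus the entire problem is local and ring-theoretic: if $R$ is a commutative coherent $d$-perfect ring and $\cx{P}$ is a totally acyclic complex of projective $R$-modules, then $\cx{P}$ is \textbf{F}-totally acyclic, i.e.\ $\tp{I}{\cx{P}}$ is acyclic for every injective $R$-module $I$. (Acyclicity of $\cx{P}$ itself is immediate since it is part of the definition of total acyclicity.)

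For the ring-theoretic core, fix an injective $R$-module $I$ and let $I^{+}=\Hom_{\ZZ}(I,\mathbb{Q}/\ZZ)$ denote its character module; since $I$ is injective and $R$ is coherent, $I$ is also flat, hence $I^{+}$ is injective, and more to the point there is a natural isomorphism $(\tp{I}{\cx{P}})^{+}\cong\Hom_R(\cx{P},I^{+})$. Because $R$ is $d$-perfect, $I^{+}$ has a projective resolution of length at most $d$, say $0\to Q_d\to\cdots\to Q_0\to I^{+}\to 0$ with each $Q_j$ projective. Total acyclicity of $\cx{P}$ gives that $\Hom_R(\cx{P},Q_j)$ is acyclic for every $j$; now a standard dimension-shifting/hyperhomology argument along this finite resolution (using that $\cx{P}$ is a bounded-below-free style complex of projectives in each degree, so $\Hom_R(\cx{P},-)$ applied degreewise to the resolution yields, after taking the total complex, a complex quasi-isomorphic to $\Hom_R(\cx{P},I^{+})$) shows that $\Hom_R(\cx{P},I^{+})$ is acyclic. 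Hence $(\tp{I}{\cx{P}})^{+}$ is acyclic, and since a complex of abelian groups is acyclic if and only if its character complex is (faithful flatness of $-^{+}$ on the level of exactness), $\tp{I}{\cx{P}}$ is acyclic. This establishes that $\cx{P}$ is \textbf{F}-totally acyclic over $R$.

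I expect the main obstacle to be the dimension-shifting step: one must be careful that applying $\Hom_R(\cx{P},-)$ to the finite projective resolution of $I^{+}$ and forming the associated total complex really does compute $\Hom_R(\cx{P},I^{+})$ up to quasi-isomorphism, and that each page of the resulting spectral sequence (or iterated mapping cone) is acyclic by the totally-acyclic hypothesis. The finiteness of the resolution (length $\le d$, supplied by $d$-perfectness) is precisely what makes this bookkeeping terminate; coherence enters only to guarantee that $I^{+}$ is injective so that it \emph{has} such a short resolution after dualizing — actually coherence is used to know $I$ flat $\Rightarrow I^{+}$ injective, and $d$-perfectness then bounds $\pd_R I^{+}$. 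An alternative, perhaps cleaner, route avoids characters: since $I$ is flat of projective dimension $\le d$, take a projective resolution $0\to P_d\to\cdots\to P_0\to I\to 0$; then $\tp{I}{\cx{P}}$ is quasi-isomorphic to the total complex of $\tp{P_\bullet}{\cx{P}}$, and $\tp{P_j}{\cx{P}}\cong\Hom_R(P_j^{*},\cx{P})$-type identifications together with total acyclicity of $\cx{P}$ against the projectives $P_j$ finish the argument. Either way, the key inputs are exactly the two hypotheses on $X$: local coherence (to pass between injectivity and flatness for the test modules) and local $d$-perfectness (to truncate resolutions and make the reduction to the already-known "totally acyclic against projectives" condition effective).
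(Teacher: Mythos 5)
Your global strategy is the same as the paper's: reduce to the affine case via Corollary~\ref{N-tot.acyc.def} and establish the ring-level statement that over a commutative coherent $d$-perfect ring every totally acyclic complex of projectives is \textbf{F}-totally acyclic (the paper simply cites this from Holm \cite[Proposition~3.4]{holm04}). The problem lies in your execution of the ring-level step. The assertion ``since $I$ is injective and $R$ is coherent, $I$ is also flat, hence $I^{+}$ is injective'' is false: injective modules over coherent rings need not be flat ($\mathbb{Q}/\ZZ$ over $\ZZ$ is already a counterexample), and rings over which injectives are flat form a very special class. Moreover, even if $I^{+}$ were injective, that would not produce the finite projective resolution you need, because $d$-perfectness bounds the projective dimension of \emph{flat} modules, not of injective ones; so the pivotal input $\pd{I^{+}}\le d$ is left unjustified. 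The same error breaks your proposed ``cleaner'' alternative, which starts from ``$I$ is flat of projective dimension $\le d$.''

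The step can be repaired, and this is exactly how the cited argument of Holm runs: over a coherent ring the character module of an injective (indeed of any FP-injective) module is \emph{flat}. Indeed, for a finitely presented $R$-module $N$, coherence provides a resolution of $N$ by finitely generated projectives, whence $\operatorname{Tor}^R_1(I^{+},N)\cong\Ext{1}{R}{N}{I}^{+}=0$; since this holds for all finitely presented $N$, the module $I^{+}$ is flat, and now $d$-perfectness yields $\pd{I^{+}}\le d$. Your dimension-shifting along the resulting finite projective resolution is then fine: each $P_n$ is projective, so applying $\Hom_R(\cx{P},-)$ to the short exact sequences obtained by breaking up the resolution gives short exact sequences of complexes, and acyclicity propagates from $\Hom_R(\cx{P},Q_j)$ down to $\Hom_R(\cx{P},I^{+})\cong(\tp{I}{\cx{P}})^{+}$; no boundedness assumption on $\cx{P}$ is needed, contrary to your aside about ``bounded-below'' complexes. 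With this correction your proof coincides in substance with the paper's, namely Holm's ring-level result combined with the same appeal to Corollary~\ref{N-tot.acyc.def}.
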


\begin{proof}
  Over a coherent $d$-perfect ring, every totally acyclic complex is
  \textbf{F}-totally acyclic; see Holm~\cite[Proposition
  3.4]{holm04}. Thus, every complex $\cx{\P}(U)$ is \textbf{F}-totally
  acyclic and the result follows as that is a Zariski-local property
  by Corollary~\ref{N-tot.acyc.def}.
\end{proof}

\begin{theorem}
  \label{tot.acyclic.mod}
  Let $X$ be locally coherent and $\cx{\P}$ be a complex of possibly
  infinite dimensional vector bundles. If $\cx{\P}$ is
  \textbf{F}-totally acyclic then $\cx{\P}(U)$ is totally acyclic for
  every open affine subset $U\subseteq X$.
\end{theorem}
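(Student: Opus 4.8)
The plan is to reduce the statement to a fact about modules over a coherent ring and to prove that fact by Pontryagin duality. Fix an open affine subset $U\subseteq X$ and put $R=\R(U)$; since $X$ is locally coherent, $R$ is coherent. The complex $\cx{P}=\cx{\P}(U)$ consists of projective $R$-modules, because each $\P_i$ is a vector bundle, and it is \textbf{F}-totally acyclic as a complex of $R$-modules by Definition~\ref{def:N-tot.acyc.def}. Thus everything comes down to the following: \emph{if $R$ is a commutative coherent ring and $\cx{P}$ is an \textbf{F}-totally acyclic complex of projective $R$-modules, then $\Hom_R(\cx{P},Q)$ is acyclic for every projective $R$-module $Q$.} Writing $M^{+}=\Hom_{\ZZ}(M,\mathbb{Q}/\ZZ)$ and using that $\mathbb{Q}/\ZZ$ is a faithfully injective cogenerator, a complex of $R$-modules is acyclic if and only if its $(-)^{+}$-dual is; so it suffices to show that $\Hom_R(\cx{P},Q)^{+}$ is acyclic.

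First I would isolate the two consequences of coherence that do the work. (a) If $I$ is an injective $R$-module, then $I^{+}$ is flat: $I$ is $\mathrm{FP}$-injective, and over a coherent ring a module $N$ is $\mathrm{FP}$-injective if and only if $N^{+}$ is flat. (b) Conversely, every flat cotorsion module $D$ is a direct summand of $J^{+}$ for some injective $J$: indeed $D^{+}$ is injective by Lambek's theorem, so $D^{++}$ is flat by (a); the canonical embedding $D\to D^{++}$ is pure and its cokernel, being a pure quotient of the flat module $D^{++}$, is flat, whence the cotorsion hypothesis on $D$ makes this pure extension split. Next, since $\cx{P}$ is an acyclic complex of projectives, $\Hom_R(\cx{P},M)$ is acyclic exactly when $\Ext{1}{R}{\Cy[k]{\cx{P}}}{M}=0$ for all $k$, while $\tp{I}{\cx{P}}$ is acyclic exactly when $\operatorname{Tor}^R_1(I,\Cy[k]{\cx{P}})=0$ for all $k$, in which case $\operatorname{Tor}^R_{i}(I,\Cy[k]{\cx{P}})=0$ for all $i\ge 1$ by dimension shifting along the exact sequences $0\to\Cy[k+1]{\cx{P}}\to P_k\to\Cy[k]{\cx{P}}\to 0$. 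Combining the natural isomorphism $\operatorname{Tor}^R_i(I,\Cy[k]{\cx{P}})^{+}\is\Ext{i}{R}{\Cy[k]{\cx{P}}}{I^{+}}$ with (a), the \textbf{F}-total acyclicity of $\cx{P}$ gives $\Ext{i}{R}{\Cy[k]{\cx{P}}}{I^{+}}=0$ for every injective $I$, every $i\ge 1$, and every $k$; and then (b) upgrades this to $\Ext{i}{R}{\Cy[k]{\cx{P}}}{D}=0$ for every flat cotorsion module $D$.

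What remains is to pass from flat cotorsion test modules to an arbitrary projective module $Q$, and this is the crux. As $Q$ is flat, $Q^{+}$ is injective, so $\tp{Q^{+}}{\cx{P}}$ is acyclic, and there is a natural chain map $\tp{Q^{+}}{\cx{P}}\to\Hom_R(\cx{P},Q)^{+}$ that is an isomorphism in every degree in which the corresponding term of $\cx{P}$ is finitely generated. If $\cx{P}$ had finitely generated terms this would finish the proof at once; for complexes of possibly infinite-dimensional vector bundles, however, the comparison map is merely a degreewise monomorphism, and I would have to prove that its cokernel complex is acyclic. My approach would be to write $Q$ as a direct summand of a free module $R^{(\Lambda)}$ and to play off $R^{(\Lambda)}$ against the flat modules $R^{\Lambda}$ (products of flats are flat over the coherent ring $R$) and $(R^{(\Lambda)})^{++}$, via the pure exact sequences relating them; since $\cx{P}$ consists of projectives, $\Hom_R(\cx{P},-)$ carries these into short exact sequences of complexes, and the vanishing of $\Ext{i}{R}{\Cy[k]{\cx{P}}}{-}$ on flat cotorsion modules, together with the acyclicity of $\tp{E}{\cx{P}}$ for injective $E$, should pin down $\Hom_R(\cx{P},R^{(\Lambda)})^{+}$. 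Executing this reduction -- reconciling the behaviour of $\Hom$ and $\otimes$ on the non-finitely-generated free modules $R^{(\Lambda)}$, where the tensor--Hom--dual identity degenerates -- is the main obstacle; the rest of the argument is formal once facts (a) and (b) are in place.
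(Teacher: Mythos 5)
Your proposal does not follow the paper's route (the paper simply observes that $\cx{\P}(U)$ is an \textbf{F}-totally acyclic complex of projective $\R(U)$-modules and then invokes Bravo--Gillespie--Hovey \cite[Theorem 6.7]{BGH}, which says exactly that over a coherent ring such a complex is totally acyclic), and as a self-contained substitute it has a genuine gap. The preparatory material is fine: the reduction to showing $\Ext{1}{R}{\Cy[k]{\cx{P}}}{Q}=0$ for projective $Q$, fact (a) that $I^{+}$ is flat for $I$ injective over a coherent ring, fact (b) that every flat cotorsion module is a direct summand of the character module of an injective, and the duality $\operatorname{Tor}^R_i(I,\Cy[k]{\cx{P}})^{+}\is\Ext{i}{R}{\Cy[k]{\cx{P}}}{I^{+}}$ all check out, and they yield $\Ext{i}{R}{\Cy[k]{\cx{P}}}{D}=0$ for every flat cotorsion $D$. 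But the step you yourself flag as ``the main obstacle''---passing from flat cotorsion test modules to a projective (or free) test module $Q$---is precisely the mathematical content of the theorem, i.e.\ of the result the paper cites; it is not a formality that can be left as ``should pin down.''

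Concretely, the manoeuvres you sketch do not close the gap. From the pure exact sequence $0\to Q\to Q^{++}\to C\to 0$ (with $C$ flat and $Q^{++}$ flat cotorsion) one only gets a surjection $\Hom_R(\Cy[k]{\cx{P}},C)\to\Ext{1}{R}{\Cy[k]{\cx{P}}}{Q}$, which gives no vanishing; equivalently, since $\Hom_R(\cx{P},Q^{++})\is(Q^{+}\otimes_R\cx{P})^{+}$ is acyclic, the problem is merely traded from the projective test module $Q$ to the flat test module $C$, and iterating with pure-injective envelopes produces an infinite coresolution with no way to terminate the dimension shift over a general coherent ring. Likewise, the comparison map $Q^{+}\otimes_R\cx{P}\to\Hom_R(\cx{P},Q)^{+}$ is indeed a degreewise monomorphism, but acyclicity of its cokernel is exactly what is at stake, and nothing in the proposal establishes it. This is why the known proofs require extra input: Bravo--Gillespie--Hovey's argument for coherent rings (or, in the Murfet--Salarian setting of \cite[Lemma 4.20(ii)]{MS}, finiteness conditions on flat modules). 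Either supply such an argument in full or cite \cite[Theorem 6.7]{BGH} as the paper does; as written, the proof is incomplete at its decisive step.
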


\begin{proof}
  Let $U\subseteq X$ be an open affine subset. The complex
  $\cx{\P}(U)$ is an \textbf{F}-totally acyclic complex of projective
  $\R(U)$-modules. Since $\R(U)$ is a coherent ring, $\cx{\P}(U)$ is
  totally acyclic by a result of Bravo, Gillespie, and Hovey
  \cite[Theorem~6.7]{BGH}.
\end{proof}

For a different proof of the theorem, one could verify that the proof
of \cite[Lemma~4.20(ii)]{MS} extends to coherent rings.

\begin{remark}
  \label{rmk:tac}
  Let $X$ be a scheme with structure sheaf $\R$. A complex $\cx{\P}$
  of vector bundles on $X$ would be called totally acyclic if for
  every open affine subset $U \subseteq X$ the $\R(U)$-complex
  $\cx{\P}(U)$ is totally acyclic as defined in \ref{dfn:tot}. We have
  not explicitly addressed that property, because we do not know if it
  is Zariski-local. However, if $X$ is locally coherent and locally
  $d$-perfect, then the property is Zariski-local. Indeed, assume that
  there exists an open affine covering $\calU$ of $X$ such that
  $\cx{\P}(U)$ is a totally acyclic complex of projective
  $\R(U)$-modules for every $U\in\calU$. It follows from Theorem
  \ref{totac.is.Ntotac} that $\cx{\P}$ is \textbf{F}-totally acyclic,
  and then for every open affine subset $U\subseteq X$ the complex
  $\cx{\P}(U)$ is totally acyclic by Theorem~\ref{tot.acyclic.mod}.
\end{remark}

\section{Existence of adjoints}
\label{sec:adj}

\begin{definition}
  Let $X$ be a scheme; by $\Ntot$ we denote the class of
  \textbf{F}-totally acyclic complexes of quasi-coherent sheaves on
  $X$.
\end{definition}

\begin{theorem}
  \label{Ntot.complete}
  Let $X$ be a scheme. The class $\Ntot$ is covering in the category
  $\Ch{\Qcoh(X)}$, and if $\Qcoh(X)$ has a flat generator, then every
  such cover is an epimorphism.
\end{theorem}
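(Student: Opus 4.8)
The plan is to deduce Theorem~\ref{Ntot.complete} from the machinery of Kaplansky classes assembled in Section~1, applied to the Grothendieck category $\calG = \Ch{\Qcoh(X)}$. By Proposition~\ref{Kap.covering}, it suffices to show that $\Ntot$ is a Kaplansky class in $\Ch{\Qcoh(X)}$ that is closed under extensions and direct limits. Closure under direct limits and extensions should be relatively painless: both are checked objectwise over an open affine covering by Corollary~\ref{N-tot.acyc.def}, and reduce to the corresponding facts for \textbf{F}-totally acyclic complexes of flat modules over a commutative ring --- namely that the class of such complexes is closed under direct limits (flatness and acyclicity are, and $\otimes$ commutes with direct limits, so tensoring with a fixed injective preserves acyclicity of the colimit) and under extensions (a degreewise-split-looking argument: in a short exact sequence of complexes with flat outer terms the middle is flat, acyclicity passes by the long exact sequence, and tensoring with an injective $I$ keeps the sequence exact since the outer terms are flat, so the middle tensor complex is acyclic too).

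The heart of the matter, and the step I expect to be the main obstacle, is proving that $\Ntot$ is a $\kappa$-Kaplansky class for a suitable cardinal $\kappa$. Here I would invoke that $\Qcoh(X)$ is locally presentable --- locally $\lambda$-presentable for some regular $\lambda$ --- hence so is $\Ch{\Qcoh(X)}$, and that both the class of flat quasi-coherent sheaves and the class of flat modules are Kaplansky classes (this is classical: flat sheaves form a Kaplansky class by Enochs--Estrada and the deconstructibility literature). The delicate point is the extra \textbf{F}-total acyclicity condition: given an \textbf{F}-totally acyclic complex $\cx{\F}$ and a $\kappa$-presentable subcomplex $\cx{\calZ}\subseteq\cx{\F}$, one must produce a $\kappa$-presentable subcomplex $\cx{\calW}$ with $\cx{\calZ}\subseteq\cx{\calW}\subseteq\cx{\F}$, such that $\cx{\calW}$ and $\cx{\F}/\cx{\calW}$ are \textbf{both} \textbf{F}-totally acyclic complexes of flat sheaves. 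The standard trick is a back-and-forth / transfinite closure argument: starting from $\cx{\calZ}$, repeatedly enlarge inside $\cx{\F}$ so as to absorb (i) enough to make the subcomplex and quotient degreewise flat with flat quotient, using the Kaplansky property for flat sheaves, (ii) enough to make the subcomplex and quotient acyclic (absorbing, for each cycle one has brought in, a bounding element --- possible since $\cx{\F}$ is acyclic), and (iii) enough to witness exactness of $I\otimes_{\R}(-)$ for a \emph{generating set} of injectives $I$; since $\Qcoh(X)$ has a generating set of injectives and tensoring commutes with the relevant colimits, controlling acyclicity of $I\otimes\cx{\calW}$ and $I\otimes(\cx{\F}/\cx{\calW})$ over that set suffices, and each such requirement only forces absorbing $\kappa$-many more elements. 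Iterating these closure steps $\omega$ times keeps everything $\kappa$-presentable for $\kappa$ chosen large enough (bigger than the presentability degrees of the relevant injective generators and of $\cx{\calZ}$), and the union is the desired $\cx{\calW}$.

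With $\Ntot$ established as a Kaplansky class closed under extensions and direct limits, Proposition~\ref{Kap.covering} immediately gives that it is covering in $\Ch{\Qcoh(X)}$, proving the first assertion. For the second assertion, suppose $\Qcoh(X)$ has a flat generator $\calG$; then so does $\Ch{\Qcoh(X)}$ --- for instance the complexes $\Dcx{n}{\calG}$ form a generating set of \textbf{F}-totally acyclic (indeed contractible, hence trivially \textbf{F}-totally acyclic) complexes of flat sheaves. I would argue that any $\Ntot$-cover $\cx{\phi}\colon\cx{\calT}\to\cx{\M}$ is an epimorphism: since $\{\Dcx{n}{\calG}\}$ generate and lie in $\Ntot$, every morphism $\Dcx{n}{\calG}\to\cx{\M}$ factors through $\cx{\phi}$ by the precover property, so the image of $\cx{\phi}$ contains the image of every such map; as these maps' images generate $\cx{\M}$, the cokernel of $\cx{\phi}$ receives no nonzero map from a generator and hence vanishes. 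This completes the proof.
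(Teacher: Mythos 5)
Your overall architecture agrees with the paper's: reduce to Proposition \ref{Kap.covering}, check closure of $\Ntot$ under extensions and direct limits affine-locally (your arguments there are the same as ours), and obtain the epimorphism statement from the generating family $\Dcx{n}{\F}$, which is fine. The gap is in the step you yourself flag as the heart of the matter, the verification that $\Ntot$ is a Kaplansky class. Your closure step (iii) proposes to absorb ``witnesses of exactness of $I\otimes_{\R}(-)$ for a generating set of injectives $I$'' in $\Qcoh(X)$. First, $\Qcoh(X)$ has an injective \emph{cogenerating} set, not a generating set of injectives. More seriously, for an arbitrary scheme \textbf{F}-total acyclicity is \emph{defined} locally (Definition \ref{def:N-tot.acyc.def}): one must control $I\otimes_{\R(U)}\cx{\F}(U)$ for every open affine $U$ and every injective $\R(U)$-module $I$. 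The reformulation via tensoring with injective objects of $\Qcoh(X)$ is only established under quasi-compactness and semi-separatedness (Proposition \ref{prp:equiv_qcss}); over a general scheme injective quasi-coherent sheaves need not be locally injective, and the condition you propose to test is not known to detect membership in $\Ntot$. Finally, your absorption scheme controls the subcomplex $\cx{\calW}$ but does not explain why the \emph{quotient} $\cx{\F}/\cx{\calW}$ is again a complex of flat sheaves that stays acyclic after tensoring with injectives over every affine; enlarging $\cx{\calW}$ to fix one requirement can destroy the other, and ``each requirement only forces absorbing $\kappa$-many more elements'' is precisely the point that needs an argument.

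The paper's proof of Lemma \ref{Ntot.Kap} avoids injectives altogether by making purity do all the work: using local $\kappa$-presentability of $\Ch{\Qcoh(X)}$ and \cite[Theorem~2.33]{AR}, every small subcomplex embeds into a small $\kappa$-pure subcomplex $\cx{\M}'\subseteq\cx{\M}$; a sphere-and-disk argument shows that then the degreewise inclusions and the inclusions of cycle objects are $\kappa$-pure, Lemma \ref{sections.pure} transfers this to ordinary purity of the sections over every open affine, and Lemma \ref{prev} shows that a subcomplex which is pure in this strong sense, inside an \textbf{F}-totally acyclic complex, is automatically acyclic, degreewise flat, \textbf{F}-totally acyclic, and has \textbf{F}-totally acyclic quotient. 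If you want to rescue your transfinite closure argument, the fix is essentially to aim for this purity condition (degreewise and on cycles, detected on sections) rather than for tensor-exactness against a chosen family of injectives; once purity is achieved, both the subcomplex and the quotient land in $\Ntot$ for free, which is exactly what the Kaplansky definition demands.
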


The assumption about existence of a flat generator for $\Qcoh(X)$ is
satisfied if the scheme $X$ is quasi-compact and semi-separated; see
Alonso Tarr\'{\i}o, Jerem\'{\i}as L\'opez, and Lipman
\cite[(1.2)]{AJL1}. We prepare for the proof with a couple of lemmas.

\begin{lemma}
  \label{prev}
  Let $X$ be a scheme, $\cx{\M}$ be a complex in $\Ntot$, and
  $\cx{\M}'$ be a subcomplex of $\cx{\M}$. If conditions (1) and (2)
  below are satisfied, then the complexes $\cx{\M}'$ and
  $\cx{\M}/\cx{\M}'$ belong to $\Ntot$.
  \begin{enumerate}
  \item $\cx{\M}'$ is acyclic.
  \item There exists an open covering $\calU$ of $X$ such that the
    $\R(U)$-submodules $\M'_n(U)\subseteq \M_n(U)$ and
    $\Cy[n]{\cx{\M}'(U)} \subseteq \Cy[n]{\cx{\M}(U)}$ are pure for
    every $U\in\calU$ and all $n\in\ZZ$.
  \end{enumerate}
\end{lemma}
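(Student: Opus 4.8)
The plan is to verify the defining property of $\Ntot$ locally, using the given open covering $\calU$ together with the Zariski-localness established in Corollary~\ref{N-tot.acyc.def}; in fact, it suffices to check that $\cx{\M}'(U)$ and $(\cx{\M}/\cx{\M}')(U) = \cx{\M}(U)/\cx{\M}'(U)$ are \textbf{F}-totally acyclic complexes of flat $\R(U)$-modules for every $U \in \calU$. So the whole lemma reduces to the following affine statement: if $\cx{M}$ is an \textbf{F}-totally acyclic complex of flat modules over a commutative ring $A$, and $\cx{M}'\subseteq\cx{M}$ is an acyclic subcomplex such that each $M'_n \subseteq M_n$ and each $\Cy[n]{\cx{M}'} \subseteq \Cy[n]{\cx{M}}$ is a pure submodule, then both $\cx{M}'$ and $\cx{M}/\cx{M}'$ are \textbf{F}-totally acyclic complexes of flat modules. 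I would prove this affine statement and then invoke Corollary~\ref{N-tot.acyc.def}.

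For the affine statement, first I would check flatness of the terms: $M'_n$ is flat because it is a pure submodule of the flat module $M_n$, and $M_n/M'_n$ is flat because the short exact sequence $0 \to M'_n \to M_n \to M_n/M'_n \to 0$ is pure and $M_n$ is flat (a quotient of a flat module by a pure submodule is flat). Next, acyclicity: $\cx{M}'$ is acyclic by hypothesis (1), and then the long exact homology sequence of $0 \to \cx{M}' \to \cx{M} \to \cx{M}/\cx{M}' \to 0$ shows $\cx{M}/\cx{M}'$ is acyclic since $\cx{M}$ is. Now fix an injective $A$-module $I$; I must show $\tp{I}{\cx{M}'}$ and $\tp{I}{(\cx{M}/\cx{M}')}$ are acyclic. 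The key point is that purity of $M'_n \subseteq M_n$ makes $0 \to \tp{I}{M'_n} \to \tp{I}{M_n} \to \tp{I}{(M_n/M'_n)} \to 0$ exact in each degree, so one gets a short exact sequence of complexes $0 \to \tp{I}{\cx{M}'} \to \tp{I}{\cx{M}} \to \tp{I}{(\cx{M}/\cx{M}')} \to 0$. Since $\tp{I}{\cx{M}}$ is acyclic, its long exact homology sequence reduces the problem to showing that $\tp{I}{\cx{M}'}$ is acyclic, i.e.\ that $\cx{M}'$ itself is \textbf{F}-totally acyclic.

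The main obstacle, and the place where the hypothesis on cycles in condition (2) is essential, is showing $\tp{I}{\cx{M}'}$ is acyclic. Since $\cx{M}'$ is acyclic, for each $n$ there is a short exact sequence $0 \to \Cy[n]{\cx{M}'} \to M'_n \to \Cy[n-1]{\cx{M}'} \to 0$, and acyclicity of $\tp{I}{\cx{M}'}$ is equivalent to $\operatorname{Tor}^A_1(I, \Cy[n]{\cx{M}'}) = 0$ together with exactness of $0 \to \tp{I}{\Cy[n]{\cx{M}'}} \to \tp{I}{M'_n} \to \tp{I}{\Cy[n-1]{\cx{M}'}} \to 0$ for all $n$. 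The strategy is to compare with the corresponding data for $\cx{M}$. Because $\cx{M}$ is \textbf{F}-totally acyclic, $\tp{I}{\cx{M}}$ is acyclic, so $0 \to \tp{I}{\Cy[n]{\cx{M}}} \to \tp{I}{M_n} \to \tp{I}{\Cy[n-1]{\cx{M}}} \to 0$ is exact. Now apply $\tp{I}{-}$ to the commutative diagram with exact rows $0 \to \Cy[n]{\cx{M}'} \to M'_n \to \Cy[n-1]{\cx{M}'} \to 0$ and $0 \to \Cy[n]{\cx{M}} \to M_n \to \Cy[n-1]{\cx{M}} \to 0$, whose vertical maps are the pure inclusions supplied by condition~(2). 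Purity of $\Cy[n]{\cx{M}'}\subseteq\Cy[n]{\cx{M}}$ gives exactness of $0 \to \tp{I}{\Cy[n]{\cx{M}'}} \to \tp{I}{\Cy[n]{\cx{M}}}$, purity of $M'_n\subseteq M_n$ similarly, and then a diagram chase — or, more cleanly, the observation that the quotient sequence $0 \to \Cy[n]{\cx{M}}/\Cy[n]{\cx{M}'} \to M_n/M'_n \to \Cy[n-1]{\cx{M}}/\Cy[n-1]{\cx{M}'} \to 0$ is again a sequence of flat modules, so remains exact after tensoring with $I$ — lets one conclude that $0 \to \tp{I}{\Cy[n]{\cx{M}'}} \to \tp{I}{M'_n} \to \tp{I}{\Cy[n-1]{\cx{M}'}} \to 0$ is exact, which is precisely what is needed. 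Thus $\tp{I}{\cx{M}'}$ is acyclic, and by the reduction above so is $\tp{I}{(\cx{M}/\cx{M}')}$; since $I$ was an arbitrary injective, both $\cx{M}'$ and $\cx{M}/\cx{M}'$ are \textbf{F}-totally acyclic, and Corollary~\ref{N-tot.acyc.def} finishes the proof.
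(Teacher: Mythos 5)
Your proof is correct and follows essentially the same route as the paper: reduce to sections over the covering and invoke Corollary~\ref{N-tot.acyc.def}, obtain flatness of $\M'_n(U)$ and $(\M_n/\M'_n)(U)$ from purity, deduce that $\tp[\R(U)]{I}{\cx{\M}'(U)}$ is acyclic by tensoring the commutative diagram of cycle sequences with an injective $I$ and noting that the composite $\tp[\R(U)]{I}{\Cy[n]{\cx{\M}'(U)}}\to\tp[\R(U)]{I}{\Cy[n]{\cx{\M}(U)}}\to\tp[\R(U)]{I}{\M_n(U)}$ is injective, and then settle the quotient via the short exact sequence of tensored complexes. One caveat: your \emph{more cleanly} alternative is not valid as stated, because $\Cy[n]{\cx{\M}(U)}/\Cy[n]{\cx{\M}'(U)}$ need not be flat---the cycles of an \textbf{F}-totally acyclic complex are in general only Gorenstein flat (already the case $\cx{\M}'=0$ shows this)---so the claim that the quotient sequence consists of flat modules is unjustified; the diagram chase you offer first is the correct (and the paper's) argument, so simply drop the alternative.
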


\begin{proof}
  Fix $U\in \calU$. For every $n\in\ZZ$ the submodule
  $\M'_n(U)\subseteq \M_n(U)$ is pure and $\M_n(U)$ is flat, so the
  modules $\M'_n(U)$ and $(\M_n/\M_n')(U)$ are flat.  By assumption,
  there is for every $n\in\ZZ$ a commutative diagram with exact rows
  \begin{equation*}
    \tag{$\ast$}
    \begin{gathered}
      \xymatrix{ 0 \ar[r] & \Cy[n]{\cx{\M}'(U)} \ar[r] \ar[d] &
        \M_n'(U) \ar[r] \ar[d]
        & \Cy[n-1]{\cx{\M}'(U)}  \ar[d] \ar[r] & 0 \\
        0 \ar[r] & \Cy[n]{\cx{\M}(U)} \ar[r] & \M_n(U) \ar[r] &
        \Cy[n-1]{\cx{\M}(U)} \ar[r] & 0\:, }
    \end{gathered}
  \end{equation*}
  where the vertical homomorphisms are pure embeddings. Let $I$ be an
  injective $\R(U)$-module. In the commutative diagram obtained by
  applying $\tp[\R(U)]{I}{-}$ to $(\ast)$, the vertical homomorphisms
  are pure embeddings, and the bottom row is exact as the complex
  $\cx{\M}(U)$ is \textbf{F}-totally acyclic. By commutativity it
  follows that the homomorphism $\tp{I}{\Cy[n]{\cx{\M}'(U)}} \to
  \tp{I}{\cx{\M}'(U)}$ is injective, whence
  $\tp[\R(U)]{I}{\cx{\M}'(U)}$ is acyclic; i.e.\ $\cx{\M}'(U)$ is
  \textbf{F}-totally acyclic. Now, as $(\cx{\M}/\cx{\M}')(U)$ is
  a complex of flat $\R(U)$-modules, the sequence
  \begin{equation*}
    0 \lra \tp[\R(U)]{I}{\cx{\M}'(U)} \lra \tp[\R(U)]{I}{\cx{\M}(U)}
    \lra \tp[\R(U)]{I}{(\cx{\M}/\cx{\M}')(U)} \lra 0
  \end{equation*}
  is exact for every injective $\R(U)$-module $I$; since the left-hand
  and middle complexes are acyclic, so is the right-hand complex, that is,
  $(\cx{\M}/\cx{\M}')(U)$ is \textbf{F}-totally acyclic. Finally,
  since $U\in \calU$ is arbitrary, it follows that $\cx{\M}'$ and
  $\cx{\M}/\cx{\M}'$ are complexes of flat quasi-coherent sheaves and
  \textbf{F}-totally acyclic per Corollary \ref{N-tot.acyc.def}.
\end{proof}

For the next lemma, recall from \ref{ss:pure} the notion of a
$\kappa$-pure morphism.

\begin{lemma}
  \label{sections.pure}
  Let $X$ be a scheme and $\kappa$ be a cardinal.  If
  $\mapdef{\tau}{\F}{\G}$ is a $\kappa$-pure morphism in $\Qcoh(X)$,
  then $\mapdef{\tau(U)}{\F(U)}{\G(U)}$ is pure monomorphism of
  $\R(U)$-modules for every open affine subset $U\subseteq X$.
\end{lemma}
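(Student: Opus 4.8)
The plan is to unwind the definition of $\kappa$-purity in $\Qcoh(X)$ (Paragraph~\ref{ss:pure}) and transport a given test diagram of $\R(U)$-modules into $\Qcoh(X)$ along the exact functor $\widetilde{(-)}\colon \R(U)\text{-Mod}\to\Qcoh(U)$ followed by a suitable ``extension'' functor back to $\Qcoh(X)$. Recall that purity of a monomorphism $\tau(U)\colon\F(U)\to\G(U)$ of $\R(U)$-modules can be tested by requiring that for every commutative square of $\R(U)$-modules
\begin{equation*}
  \xymatrix{
    A' \ar[r]^-{f'} \ar[d]_-{\a} & B' \ar[d] \\
    \F(U) \ar[r]^-{\tau(U)} & \G(U)
  }
\end{equation*}
with $A'$ and $B'$ finitely presented, there is a factorization $\g\colon B'\to\F(U)$ with $\g\circ f'=\a$. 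So fix such a square. The goal is to produce the factorization $\g$ by exploiting $\kappa$-purity of $\tau$ upstairs.

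First I would choose the restriction functor $j_U^*\colon\Qcoh(X)\to\Qcoh(U)$ together with its left adjoint, the extension-by-zero-type functor $j_{U!}$ (or, if one prefers to avoid non-quasi-coherence issues, work directly in $\Qcoh(U)$, noting that $\kappa$-purity of $\tau$ in $\Qcoh(X)$ restricts to $\kappa$-purity of $\tau(U)^{\sim}=j_U^*\tau$ in $\Qcoh(U)$ because $j_U^*$ is exact and sends $\kappa$-presentable objects to $\kappa$-presentable objects). The key point is that for a ring $R$, a finitely presented $R$-module is $\aleph_0$-presentable, hence $\kappa$-presentable, as an object of $R\text{-Mod}$, and the equivalence $R\text{-Mod}\simeq\Qcoh(\Spec R)$ preserves $\kappa$-presentability. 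Applying $\widetilde{(-)}$ to the square above yields a commutative square in $\Qcoh(U)$ with $\kappa$-presentable corners $\widetilde{A'},\widetilde{B'}$ and bottom edge $\widetilde{\tau(U)}=j_U^*\tau$; composing with the unit of $(j_{U!},j_U^*)$, or simply applying $\kappa$-purity in $\Qcoh(U)$, produces a morphism $\widetilde{B'}\to\F(U)^{\sim}$ splitting $\widetilde{f'}$. Taking sections over $U$ (equivalently, applying the quasi-inverse equivalence $\Gamma(U,-)$) returns the desired $\R(U)$-linear $\g\colon B'\to\F(U)$ with $\g f'=\a$. That $\tau(U)$ is a monomorphism is immediate since $\Gamma(U,-)$ is exact on quasi-coherent sheaves over the affine $U$ and $\tau$ is mono.

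The main obstacle is the bookkeeping around functoriality and quasi-coherence: one must be careful that $j_U^*$ (which is exact and preserves $\kappa$-presentable objects) and $\widetilde{(-)}$ interact correctly so that a test square for $\tau(U)$ in $\R(U)\text{-Mod}$ really does lift to a test square for $\tau$ (or for $j_U^*\tau$) with $\kappa$-presentable corners. Once that identification is in place the argument is a direct transport of the lifting property, so I expect no essential difficulty beyond verifying that ``finitely presented over $\R(U)$'' sheafifies to ``$\kappa$-presentable in $\Qcoh(U)$'' and that $\kappa$-purity is inherited by restriction to an open affine subscheme.
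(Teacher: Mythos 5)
Your reduction to a lifting problem over $\R(U)$ with finitely presented corners, and the passage back and forth along the equivalence between $\R(U)$-modules and $\Qcoh(U)$, are fine; the gap is in the one step that carries all the weight, namely that $\kappa$-purity of $\tau$ in $\Qcoh(X)$ is inherited by $j_U^*\tau$ in $\Qcoh(U)$. Neither of the two justifications you offer works as stated. The proposed left adjoint $j_{U!}$ (extension by zero) does not take quasi-coherent sheaves to quasi-coherent sheaves in general, so there is no such functor $\Qcoh(U)\to\Qcoh(X)$ to transpose the test square along. And the alternative reason, that $j_U^*$ is exact and sends $\kappa$-presentable objects to $\kappa$-presentable objects, points in the wrong direction: to solve a lifting problem for $j_U^*\tau$ with $\kappa$-presentable corners $A',B'$ in $\Qcoh(U)$ by appealing to $\kappa$-purity of $\tau$ upstairs, you would have to transport $A'$ and $B'$ to $\kappa$-presentable objects of $\Qcoh(X)$ equipped with maps to $\F$ and $\G$, i.e.\ you need a left adjoint of $j_U^*$ that preserves $\kappa$-presentability---not preservation of $\kappa$-presentability by $j_U^*$ itself, and exactness of $j_U^*$ is irrelevant to the lifting property. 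As written, the restriction claim is exactly the content of the lemma and it is left unproved.

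The claim you need is true, but the tool that makes it work is the one the paper uses: by \cite[Proposition 2.30]{AR}, a $\kappa$-pure monomorphism in the locally $\kappa$-presentable category $\Qcoh(X)$ is a $\kappa$-directed colimit of split monomorphisms; equivalently, the associated $\kappa$-pure short exact sequence $0\to\F\to\G\to\H\to 0$ is a $\kappa$-directed colimit of split exact sequences. Since taking sections over an open affine $U$ preserves direct limits of quasi-coherent sheaves and clearly preserves split exactness, the sequence $0\to\F(U)\to\G(U)\to\H(U)\to 0$ is a $\kappa$-directed---hence, $\kappa$ being infinite, $\aleph_0$-directed---colimit of split exact sequences of $\R(U)$-modules and is therefore pure; this is the paper's proof and it makes the test-square bookkeeping unnecessary. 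If you prefer your formulation, the same characterization shows that $j_U^*\tau$ is a $\kappa$-directed colimit of split monomorphisms in $\Qcoh(U)$ and hence $\kappa$-pure, after which your transport of the finitely presented test square does go through; but some such argument must replace the unjustified restriction step.
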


\begin{proof}
  The category $\Qcoh(X)$ is a Grothendieck category; see
  \cite[Corollary 3.5]{EE}. By a result of Beke \cite[Proposition
  3]{Beke} there exists an infinite cardinal $\kappa$, such that
  $\Qcoh(X)$ is locally $\kappa$-presentable. The pure morphism $\tau$
  is by \cite[Proposition 2.29]{AR} a monomorphism, so it yields a
  $\kappa$-pure exact sequence $$\mathbb{E}\deq 0\lra \F\lra \G\lra
  \H\lra 0\:.$$ By \cite[Proposition 2.30]{AR} it is the colimit of a
  $\kappa$-directed system $(\mathbb{E}_{\alpha})$ of splitting short
  exact sequences in $\Qcoh(X)$. For every open affine subset
  $U\subseteq X$, one gets a $\kappa$-directed system
  $(\mathbb{E}_{\alpha}(U))$ of split exact sequences of
  $\R(U)$-modules. Since $\kappa$ is infinite, every $\kappa$-directed
  system is an $\aleph_0$-directed system; see for example \cite[Fact
  A.2]{G}. Therefore, $\mathbb{E}(U)$ is a direct limit of splitting
  short exact sequences of $\R(U)$-modules. Hence, the sequence
  $\mathbb{E}(U)$ of $\R(U)$-modules is pure.
\end{proof}

\begin{lemma}
  \label{Ntot.Kap}
  For every scheme $X$, the class $\Ntot$ is a Kaplansky class.
\end{lemma}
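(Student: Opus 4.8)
The goal is to find a cardinal $\kappa$ for which $\Ntot$, the class of $\textbf{F}$-totally acyclic complexes of quasi-coherent sheaves on $X$, is a $\kappa$-Kaplansky class in $\Ch{\Qcoh(X)}$. The natural strategy is to reduce to purity: given an inclusion $\cx{\M}'\subseteq\cx{\M}$ of complexes with $\cx{\M}\in\Ntot$ and $\cx{\M}'$ $\kappa$-presentable, I want to produce a $\kappa$-presentable subcomplex $\cx{\N}$ with $\cx{\M}'\subseteq\cx{\N}\subseteq\cx{\M}$, with $\cx{\N}\in\Ntot$ and $\cx{\M}/\cx{\N}\in\Ntot$. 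The tool that makes $\cx{\N}$ well-behaved is Lemma~\ref{prev}: if $\cx{\N}$ is acyclic and, on some open covering, the inclusions $\N_n(U)\subseteq\M_n(U)$ and $\Cy[n]{\cx{\N}(U)}\subseteq\Cy[n]{\cx{\M}(U)}$ are pure, then both $\cx{\N}$ and $\cx{\M}/\cx{\N}$ lie in $\Ntot$. And Lemma~\ref{sections.pure} tells me that $\kappa$-pure \emph{subsheaf} inclusions become pure on sections over every open affine $U$. So it suffices to build $\cx{\N}$ so that $\cx{\N}\subseteq\cx{\M}$ is a $\kappa$-pure subcomplex in $\Ch{\Qcoh(X)}$, with $\cx{\N}$ acyclic and $\kappa$-presentable, and in addition each cycle subsheaf $\Cy[n]{\cx{\N}}\subseteq\Cy[n]{\cx{\M}}$ is $\kappa$-pure as a subobject of $\Qcoh(X)$ (so that its sections inherit purity too).

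The first step is to fix $\kappa$: since $\Qcoh(X)$ is locally presentable (Beke, as cited in Lemma~\ref{sections.pure}), so is $\Ch{\Qcoh(X)}$; choose $\kappa$ so that both categories are locally $\kappa$-presentable, and moreover so that in a $\kappa$-presentable complex every component and every cycle object is $\kappa$-presentable (this is a routine cardinal-arithmetic adjustment, using that cycle objects are kernels and finite/$\kappa$-small limits of $\kappa$-presentables are $\kappa$-presentable after enlarging $\kappa$). Next, given the $\kappa$-presentable subcomplex $\cx{\M}'\subseteq\cx{\M}$, I invoke the standard ``purification'' principle in locally $\kappa$-presentable categories: any $\kappa$-presentable subobject is contained in a $\kappa$-presentable $\kappa$-pure subobject. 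I apply this iteratively to absorb, in turn, the subcomplex $\cx{\M}'$, each of the finitely-many-per-stage cycle subobjects $\Cy[n]{\cx{\N}}$ of the current approximation, and enough boundaries to guarantee acyclicity of $\cx{\N}$ — running a countable chain of such enlargements and taking the union, which is again $\kappa$-presentable and $\kappa$-pure in $\cx{\M}$, is acyclic (a $\kappa$-pure, hence pure, acyclic ``shape'' forces acyclicity — or more simply, arrange at each step that $\Cy[n]{\cx{\N}}$ maps onto $\Cy[n]{\cx{\N}}\cap\Bo[n]{\cx{\M}}$ portion, standard for building acyclic pure subcomplexes), and has each $\Cy[n]{\cx{\N}}\subseteq\Cy[n]{\cx{\M}}$ $\kappa$-pure by construction.

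With such a $\cx{\N}$ in hand, Lemma~\ref{sections.pure} applied to $\cx{\N}\subseteq\cx{\M}$ and to each $\Cy[n]{\cx{\N}}\subseteq\Cy[n]{\cx{\M}}$ gives that, over every open affine $U\subseteq X$, the inclusions $\N_n(U)\subseteq\M_n(U)$ and $\Cy[n]{\cx{\N}(U)}\subseteq\Cy[n]{\cx{\M}(U)}$ are pure (here one also uses that taking sections over $U$ commutes with kernels, so $\Cy[n]{\cx{\N}}(U)=\Cy[n]{\cx{\N}(U)}$). Taking $\calU$ to be the covering of $X$ by \emph{all} open affine subsets, hypothesis~(2) of Lemma~\ref{prev} holds; hypothesis~(1), acyclicity of $\cx{\N}$, holds by construction. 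Lemma~\ref{prev} then delivers $\cx{\N}\in\Ntot$ and $\cx{\M}/\cx{\N}\in\Ntot$, which is exactly the $\kappa$-Kaplansky condition for $\Ntot$.

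The main obstacle is the bookkeeping in the iterative construction of $\cx{\N}$: one must simultaneously make the \emph{complex} inclusion $\kappa$-pure, make each \emph{cycle} inclusion $\kappa$-pure, and make $\cx{\N}$ acyclic, all while keeping $\cx{\N}$ $\kappa$-presentable. The standard back-and-forth argument handles this, but care is needed to verify that $\kappa$-purity of cycle subobjects survives the countable colimit (filtered colimits of $\kappa$-pure monos are $\kappa$-pure) and that ``cycles commute with sections over an open affine'' — this last point is where semi-separatedness or flatness is \emph{not} needed, since $U$ open affine means $(-)|_U$ is exact on $\Qcoh(X)$.
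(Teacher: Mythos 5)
Your overall reduction is the same as the paper's: produce an intermediate subcomplex whose inclusion is $\kappa$-pure in $\Ch{\Qcoh(X)}$, pass to sections via Lemma~\ref{sections.pure}, and conclude with Lemma~\ref{prev}. The gap is in your construction of that subcomplex. You run a countable chain of "purifications" (absorbing the cycle subobjects and arranging acyclicity along the way) and justify the limit step by the claim that filtered colimits of $\kappa$-pure monomorphisms are $\kappa$-pure. That closure property is only available for $\kappa$-filtered (e.g.\ $\kappa$-directed) colimits; the purification cardinal here is uncountable, so a colimit over a countable chain is not covered, and a $\kappa$-presentable test object need not factor through any stage of your chain. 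Likewise, acyclicity of the union is only asserted: neither the parenthetical "a $\kappa$-pure acyclic shape forces acyclicity" nor the "absorb boundaries" device is actually carried out, and in a general $\Qcoh(X)$ (no elements) this bookkeeping is exactly the part that needs proof. So as written the construction does not deliver a subcomplex with the properties you feed into Lemma~\ref{prev}.

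The paper's proof shows that none of this extra machinery is needed, which is also the cleanest way to repair your argument. A single application of \cite[Theorem~2.33]{AR} to $\Ch{\Qcoh(X)}$ embeds the given subcomplex into a $\gamma$-presentable $\kappa$-pure subcomplex $\cx{\M}'\subseteq\cx{\M}$ (note that this requires passing to a larger cardinal $\gamma$, not the same $\kappa$). Then the degreewise and cycle-wise purity you were trying to arrange by hand comes for free: for a $\kappa$-presentable $\L\in\Qcoh(X)$ the complexes $\Scx{n}{\L}$ and $\Dcx{n}{\L}$ are $\kappa$-presentable, and the isomorphisms $\Mor(\Scx{n}{\L},\cx{\N})\cong\Mor(\L,\Cy[n]{\cx{\N}})$ and $\Mor(\Dcx{n}{\L},\cx{\N})\cong\Mor(\L,\N_n)$ convert $\kappa$-purity of $0\to\cx{\M}'\to\cx{\M}\to\cx{\M}/\cx{\M}'\to 0$ into $\kappa$-pure exactness of $0\to\Cy[n]{\cx{\M}'}\to\Cy[n]{\cx{\M}}\to\Cy[n]{\cx{\M}/\cx{\M}'}\to 0$ and $0\to\M'_n\to\M_n\to\M_n/\M'_n\to 0$ (using that $\Qcoh(X)$ has a generating set of $\kappa$-presentable objects). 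Acyclicity of $\cx{\M}'$ is then automatic from two applications of the snake lemma, since $\cx{\M}$ is acyclic. If you incorporate this observation, your iterative scheme becomes unnecessary and the purity-of-the-union and acyclicity issues disappear.
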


\begin{proof}
  Let $\kappa$ be a cardinal such that the categories $\Qcoh(X)$ and
  $\Ch{\Qcoh(X)}$ are locally $\kappa$-presentable; see
  \cite[Corollary 3.5]{EE}, \cite[Proposition 3]{Beke}, and
  \ref{ss:complexes}. Let $\cx{\M} \ne 0$ be a complex in
  $\Ch{\Qcoh(X)}$; by \cite[Theorem~2.33]{AR} there is a cardinal
  $\gamma$ such that every $\gamma$-presentable subcomplex $\cx{\M}''
  \subseteq \cx{\M}$ can be embedded in a $\gamma$-presentable
  $\kappa$-pure subcomplex $\cx{\M'} \subseteq \cx{\M}$.  To prove
  that $\Ntot$ is a Kaplansky class, it is enough to verify that
  $\cx{\M}'$ satisfies conditions (1) and (2) in
  Lemma~\ref{prev}. Now, for any $\kappa$-presentable sheaf $\L \in
  \Qcoh(X)$, the complexes $\Scx{n}{\L}$ and $\Dcx{n}{\L}$ are
  $\kappa$-presentable. It follows that the $\kappa$-pure exact
  sequence $0 \to \cx{\M}' \to \cx{\M} \to \cx{\M}/\cx{\M}' \to 0$
  induces exact sequences
  \begin{equation*}
    0 \to \Mor(\Scx{n}{\L},\cx{\M}') \to \Mor(\Scx{n}{\L},\cx{\M}) 
    \to \Mor(\Scx{n}{\L},\cx{\M}/\cx{\M}') \to 0
  \end{equation*}  
  and
  \begin{equation*}
    0 \to \Mor(\Dcx{n}{\L},\cx{\M}') \to \Mor(\Dcx{n}{\L},\cx{\M}) 
    \to \Mor(\Dcx{n}{\L},\cx{\M}/\cx{\M}') \to 0\:,
  \end{equation*}
  where $\Mor$ is short for $\Hom_{\Ch{\Qcoh(X)}}$. For every
  $\cx{\N}\in\Ch{\Qcoh(X)}$ and $n\in\ZZ$ there are standard
  isomorphisms
  \begin{equation*}
    \Mor(\L,\Cy[n]{\cx{\N}}) \dis \Mor(\Scx{n}{\L},\cx{\N}) \quad\text{and}\quad
    \Mor(\L,\N_n) \dis \Mor(\Dcx{n}{\L},\cx{\N})\:,
  \end{equation*}
  which allow us to rewrite the exact sequences above as
  \begin{equation*}
    0 \to \Mor(\L,\Cy[n]{\cx{\M}'}) \to \Mor(\L,\Cy[n]{\cx{\M}}) 
    \to \Mor(\L,\Cy[n]{\cx{\M}/\cx{\M}'}) \to 0
  \end{equation*}  
  and
  \begin{equation*}
    0 \to \Mor(\L,\M_n') \to \Mor(\L,\M_n) 
    \to \Mor(\L,\M_n/\M_n') \to 0\:.
  \end{equation*}
  As the category $\Qcoh(X)$ is locally $\kappa$-presentable, it has a
  generating set of $\kappa$-presentable objects; it follows that $0
  \to \Cy[n]{\cx{\M}'} \to \Cy[n]{\cx{\M}} \to
  \Cy[n]{\cx{\M}/\cx{\M}'} \to 0$ and $0 \to \M'_n \to \M_n \to
  \M_n/\M'_n \to 0$ are exact, even $\kappa$-pure exact, sequences in
  $\Qcoh(X)$. Let $U \subseteq X$ be an open affine subset; by
  Lemma~\ref{sections.pure}, the sequences of $\R(U)$-modules $0 \to
  \Cy[n]{\cx{\M}'(U)} \to \Cy[n]{\cx{\M}(U)} \to
  \Cy[n]{(\cx{\M}/\cx{\M}')(U)} \to 0$ and \mbox{$0 \to \M'_n(U) \to
    \M_n(U) \to (\M_n/\M'_n)(U) \to 0$} are pure exact. Thus,
  condition $(2)$ in Lemma~\ref{prev} is satisfied. It remains to show
  that the subcomplex $\cx{\M}'$ is acyclic. To this end, apply the
  snake lemma to the canonical diagram
  \begin{equation*}
    \xymatrix{
      0 \ar[r] & \Cy[n]{\cx{\M}'} \ar[r] \ar[d] & \Cy[n]{\cx{\M}} \ar[r] \ar[d] &
      \Cy[n]{\cx{\M}/\cx{\M}'} \ar[r] \ar[d] & 0\\
      0 \ar[r] & \M'_n \ar[r] & \M_n \ar[r] &\M_n/\M'_n \ar[r] & 0
    }
  \end{equation*}
  to get an exact sequence $0 \to \Bo[n-1]{\cx{\M}'} \to
  \Bo[n-1]{\cx{\M}} \to \Bo[n-1]{\cx{\M}/\cx{\M}'} \to 0$ for every
  $n\in\ZZ$. Now apply the snake lemma to
  \begin{equation*}
    \xymatrix{
      0 \ar[r] & \Bo[n]{\cx{\M}'} \ar[r] \ar[d] & \Bo[n]{\cx{\M}} \ar[r] \ar[d] &
      \Bo[n]{\cx{\M}/\cx{\M}'} \ar[r] \ar[d] & 0\\
      0 \ar[r] & \Cy[n]{\cx{\M}'} \ar[r] & \Cy[n]{\cx{\M}} \ar[r] &
      \Cy[n]{\cx{\M}/\cx{\M}'} \ar[r] & 0
    }
  \end{equation*}
  One has $\Bo[n]{\cx{\M}} = \Cy[n]{\cx{\M}}$ for every $n\in\ZZ$ as
  $\cx{\M}$ is acyclic. It follows that $\Bo[n]{\cx{\M'}} =
  \Cy[n]{\cx{\M'}}$ holds for all $n\in\ZZ$, so $\cx{\M'}$ is acyclic.
\end{proof}

We can prove now Theorem \ref{Ntot.complete}.

\begin{proof}[Proof of Theorem \ref{Ntot.complete}]
  The goal is to apply Proposition \ref{Kap.covering}. The class
  $\Ntot$ is Kaplansky by Lemma~\ref{Ntot.Kap}, and it remains to
  prove that it is closed under extensions and direct limits. Let
  $0\to \cx{\M}' \to \cx{\M} \to \cx{\M}'' \to 0$ be an exact sequence
  in $\Ch{\Qcoh(X)}$ with $\cx{\M}'$ and $\cx{\M}''$ in $\Ntot$. For
  every open affine subset $U\subseteq X$, there is an exact sequence
  $0\to \cx{\M}'(U)\to \cx{\M}(U)\to \cx{\M}''(U)\to 0$ of complexes
  of $\R(U)$-modules. As $\cx{\M}'$ and $\cx{\M}''$ are complexes of
  flat modules, so is $\cx{\M}$. The sequence remains exact when
  tensored by an injective $\R(U)$-module, and since $\cx{\M'}(U)$ and
  $\cx{\M}''(U)$ are \textbf{F}-totally acyclic, so is
  $\cx{\M}(U)$. It follows that $\cx{\M}$ belongs to $\Ntot$; that is,
  $\Ntot$ is closed under extensions.

  Let $\setof{\cx{\F}^i}{i\in I}$ be a direct system of complexes in
  $\Ntot$.  For every open affine subset $U\subseteq X$ one then has a
  direct system $\setof{\cx{\F}^i(U)}{i\in I}$ of \textbf{F}-totally
  acyclic complexes of $\R(U)$-modules. Now, $\varinjlim_{i\in I}
  \cx{\F}^i(U)$ is an acyclic complex of flat $\R(U)$-modules, and
  since direct limits commute with tensor products and homology, it is
  \textbf{F}-totally acyclic. The quasi-coherent sheaf $\cx{\F} =
  \varinjlim_{i\in I} \cx{\F}^i$ satisfies $\cx{\F}(U) =
  \varinjlim_{i\in I} \cx{\F}^i(U)$, so we conclude that $\cx{\F}$
  belongs to $\Ntot$. Now it follows from Proposition
  \ref{Kap.covering} that $\Ntot$ is covering.

  Now assume that the category $\Qcoh(X)$ has a flat generator
  $\F$. It follows that the complexes of flat sheaves $\Dcx{n}{\F}$,
  $n\in\ZZ$, generate the category $\Ch{\Qcoh(X)}$. Evidently, each
  complex $\Dcx{n}{\F}$ is \textbf{F}-totally acyclic, so every
  $\Ntot$-cover is an epimorphism.
\end{proof}

For any scheme $X$, one can consider the homotopy category of flat
quasi-coherent sheaves, which we denote $\Kflat$. It is a triangulated
category, and the full subcategory $\Ktot$ of \textbf{F}-totally
acyclic complexes in $\Kflat$ is a triangulated subcategory. The next
result generalizes \cite[Corollary 4.26]{MS} to arbitrary schemes.

\begin{corollary}
  \label{cor.right.adj}
  For any scheme $X$, the inclusion $\Ktot\to \Kflat$ has a right
  adjoint.
\end{corollary}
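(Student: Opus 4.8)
The plan is to deduce the existence of the right adjoint from Theorem~\ref{Ntot.complete} by a general Bousfield-localization argument for triangulated categories. First I would record that $\Kflat$ is a triangulated category closed under set-indexed coproducts: coproducts of complexes of flat sheaves are complexes of flat sheaves, and homotopy is preserved. Likewise $\Ktot$ is a triangulated subcategory of $\Kflat$ that is closed under coproducts, since a coproduct of \textbf{F}-totally acyclic complexes is again \textbf{F}-totally acyclic—this is essentially the direct-limit computation already carried out in the proof of Theorem~\ref{Ntot.complete}, applied to coproducts (which are particular direct limits), using that tensor products and homology commute with coproducts and that sections over an open affine commute with coproducts. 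Thus $\Ktot$ is a localizing subcategory of $\Kflat$ in the sense of Neeman.

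The key step is to pass from the covering property in $\Ch{\Qcoh(X)}$ to a homotopy-category statement. Concretely, I would show that every complex $\cx{\F}$ of flat quasi-coherent sheaves admits a morphism $\cx{\M}\to\cx{\F}$ in $\Ch{\Qcoh(X)}$ with $\cx{\M}$ \textbf{F}-totally acyclic such that, for every \textbf{F}-totally acyclic complex $\cx{\N}$ of flat sheaves, the induced map $\Hom_{\Kflat}(\cx{\N},\cx{\M})\to\Hom_{\Kflat}(\cx{\N},\cx{\F})$ is bijective. The existence of the map is exactly the $\Ntot$-precover furnished by Theorem~\ref{Ntot.complete} (restricted to the flat complex $\cx{\F}$, noting that $\Ntot$-members are flat complexes). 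To upgrade surjectivity-of-$\Hom$ at the level of chain maps to bijectivity at the level of homotopy classes, I would use that $\Ntot$ is also closed under the relevant extensions and cones: completing the $\Ntot$-precover to a triangle $\cx{\M}\to\cx{\F}\to\cx{C}\to$ and checking that $\cx{C}$ is right-orthogonal to $\Ktot$ in $\Kflat$. The orthogonality of $\cx{C}$ follows because $\Ext^1$-vanishing against the deconstructible/Kaplansky class $\Ntot$ (the pair $(\Ntot,\Ntot^\perp)$ is cogenerated by a set by Lemma~\ref{Ntot.Kap} and the proof of Proposition~\ref{Kap.covering}) translates into Hom-vanishing in the homotopy category against bounded-type test complexes $\Dcx{n}{\G}$ and, more generally, against all of $\Ktot$ via the standard dévissage along the filtration.

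With such a triangle $\cx{\M}\to\cx{\F}\to\cx{C}\to\Sigma\cx{\M}$ produced functorially enough in $\cx{\F}$, the assignment $\cx{\F}\mapsto\cx{\M}$ extends to a functor $\Kflat\to\Ktot$ right adjoint to the inclusion: this is precisely the content of the recognition theorem for Bousfield colocalizations (see Neeman, \emph{Triangulated Categories}), which says that the inclusion of a triangulated subcategory $\Ktot\hookrightarrow\Kflat$ admits a right adjoint if and only if for every object $\cx{\F}$ of $\Kflat$ there is a triangle $\cx{\M}\to\cx{\F}\to\cx{C}\to$ with $\cx{\M}\in\Ktot$ and $\cx{C}\in\Ktot^{\perp}$. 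I would assemble the argument by invoking that criterion and checking its two hypotheses as above.

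The main obstacle I anticipate is the bridge between the module-theoretic/abelian-category covering statement and the triangulated statement—specifically, proving that the cofiber $\cx{C}$ of the $\Ntot$-precover lies in $\Ktot^{\perp}$ inside $\Kflat$, i.e.\ that $\Hom_{\Kflat}(\cx{\N},\cx{C})=0$ for all $\cx{\N}\in\Ktot$. The subtlety is that $\Hom_{\Kflat}$ involves homotopy classes, not chain maps, so one must control $\Ext^{1}$ as well as $\Hom$ of complexes, and one must know that the relevant complexes appearing in the computation (e.g.\ the complexes $\Dcx{n}{\F}$ and their $\Ntot$-filtered relatives) suffice to detect vanishing. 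This is handled by the cogenerated-by-a-set property and Eklof's lemma as in Proposition~\ref{Kap.covering}, together with the elementary identification of null-homotopic maps with maps factoring through contractible complexes $\Dcx{n}{-}$; but making this precise is where the real work lies, and it is the step I would write out carefully.
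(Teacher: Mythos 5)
Your overall frame (Bousfield colocalization: produce for each $\cx{\F}\in\Kflat$ a triangle $\cx{\M}\to\cx{\F}\to\cx{C}\to\Sigma\cx{\M}$ with $\cx{\M}\in\Ktot$ and $\cx{C}\in\Ktot^{\perp}$) is a correct criterion, but the decisive step---that the cone of the precover supplied by Theorem~\ref{Ntot.complete} lies in $\Ktot^{\perp}$---is exactly what you have not proved, and the mechanism you point to would not deliver it. Orthogonality of $\cx{C}$ is equivalent to $\Hom_{\Kflat}(\cx{\N},\cx{\M})\to\Hom_{\Kflat}(\cx{\N},\cx{\F})$ being \emph{bijective} for all $\cx{\N}\in\Ktot$; the precover property gives only surjectivity, and the cone of an arbitrary precover is in general not right-orthogonal: if $\cx{\F}$ is itself a non-contractible complex in $\Ntot$, the projection $\cx{\F}\oplus\Sigma\cx{\F}\to\cx{\F}$ is an $\Ntot$-precover whose cone is homotopy equivalent to $\Sigma^{2}\cx{\F}$, which is not in $\Ktot^{\perp}$. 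So one needs a \emph{special} choice of precover, and nothing in the paper provides one: the (pre)covers coming from Proposition~\ref{Kap.covering} are built by a pushout onto the trace subobject and need not even be epimorphisms when $\Qcoh(X)$ lacks a flat generator, so there is no short exact sequence $0\to\cx{\mathscr K}\to\cx{\M}\to\cx{\F}\to 0$ with $\cx{\mathscr K}\in\Ntot^{\perp}$ to feed into Eklof's lemma or a d\'evissage. Moreover, even granted such a sequence, vanishing of $\Ext{1}{\Ch{\Qcoh(X)}}{\cx{\N}}{\cx{\mathscr K}}$ does not directly give $\Hom_{\Kflat}(\cx{\N},\cx{C})=0$: null-homotopies correspond to \emph{degreewise split} extensions, and the comparison map $\Sigma\cx{\mathscr K}\to\operatorname{Cone}$ is only a quasi-isomorphism, which proves nothing in a homotopy (as opposed to derived) category. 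Minimality of the cover does not repair this either. In short, the ``bridge'' you flag at the end is not a technical write-up issue; it is the whole content of the statement, and your sketch of it does not go through.

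The paper avoids constructing the orthogonal triangle altogether: it notes that $\Ktot$ is a thick subcategory of $\Kflat$ (closed under retracts), that by Theorem~\ref{Ntot.complete} every complex in $\Kflat$ admits a $\Ktot$-precover (a chain-level factorization passes to homotopy classes), and then invokes \cite[Proposition~1.4]{Nee2}, which says precisely that the inclusion of a thick, precovering subcategory has a right adjoint. That proposition is the nontrivial passage from ``surjective on Hom'' to ``adjoint'' (equivalently, to the existence of your triangles); an argument along your lines would in effect have to reprove it. So either quote \cite[Proposition~1.4]{Nee2} as the paper does, or supply genuinely special precovers with cone controlled in the homotopy category---which the Kaplansky-class machinery of Section~4, as it stands, does not give you.
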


\begin{proof}
  The full subcategory $\Ktot$ is closed under retracts, so per Neeman
  \cite[Definition 1.1]{Nee2} it is a thick subcategory of
  $\Kflat$. By Theorem~\ref{Ntot.complete} every complex in $\Kflat$
  has a $\Ktot$-precover; now \cite[Proposition 1.4]{Nee2} yields the
  existence of a right adjoint to the inclusion $\Ktot\to \Kflat$.
\end{proof}

\begin{definition}
  Let $X$ be a scheme. A quasi-coherent sheaf $\G$ on $X$ is called
  \emph{Gorenstein flat} if there exists an \textbf{F}-totally acyclic
  complex $\cx{\F}$ of flat quasi-coherent sheaves on $X$ with $\G\is
  \Cy[n]{\cx{\F}}$ for some $n\in \ZZ$.
\end{definition}

The affine case of the next result was proved by Yang and Liang
\cite[Theorem~A]{yang14}. Recall from \cite[(1.2)]{AJL1} that
$\Qcoh(X)$ has a flat generator---in particular, a Gorenstein flat
generator---if the scheme $X$ is quasi-compact and semi-separated.

\begin{corollary}
  \label{Gor.flat.prec}
  Let $X$ be a scheme. Every quasi-coherent sheaf on $X$ has a
  Gorenstein flat precover. Moreover, if the category $\Qcoh(X)$ has a
  Gorenstein flat generator, then the Gorenstein flat precover is an
  epimorphism.
\end{corollary}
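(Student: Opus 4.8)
The plan is to deduce the statement from Theorem~\ref{Ntot.complete} by transporting covers along the cycle functor $\Cy[0]{-}$. First I would observe that a quasi-coherent sheaf $\G$ is Gorenstein flat exactly when it is of the form $\Cy[0]{\cx{\F}}$ for some $\cx{\F}\in\Ntot$ (the index $n$ in the definition is immaterial after a shift). So given an arbitrary quasi-coherent sheaf $\M$ on $X$, I would form the complex $\Scx{0}{\M}$ concentrated in degree $0$ and take, by Theorem~\ref{Ntot.complete}, an $\Ntot$-cover $\mapdef{\pi}{\cx{\F}}{\Scx{0}{\M}}$. The degree-$0$ component of $\pi$ gives a morphism $F_0\to \M$; composing with the inclusion $\Cy[0]{\cx{\F}}\hookrightarrow F_0$ (note $\pi$ kills $\partial_1(F_1)$ since the target is concentrated in degree $0$, so $\pi_0$ factors through $F_0/\Bo[0]{\cx{\F}}$, and using acyclicity $\Bo[0]{\cx{\F}}=\Cy[0]{\cx{\F}}$… wait, that is the wrong direction). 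Let me instead use that $\pi_0$ vanishes on the image of $\partial_1$, hence also — since $\pi_{-1}=0$ forces $\pi_0(\Cy[0]{\cx{\F}})$ to be unconstrained — restricts to a map $\mapdef{g}{\Cy[0]{\cx{\F}}}{\M}$; but the cleaner route is: any chain map $\cx{\F}\to\Scx{0}{\M}$ is the same datum as a map $\Coker(\partial_1)=F_0/\Bo[0]{\cx{\F}}\to\M$, and since $\cx{\F}$ is acyclic this is a map $F_0/\Cy[0]{\cx{\F}}\to\M$, i.e.\ not what we want either.

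The correct adjunction is with $\Dcx{0}{-}$ or with the brutal truncation, so I would instead argue as follows. Given $\M$, pick any flat presentation is not available; instead take the $\Ntot$-cover $\mapdef{\pi}{\cx{\F}}{\Scx{0}{\M}}$ and set $\G:=\Cy[-1]{\cx{\F}}=\Bo[-1]{\cx{\F}}$, which is Gorenstein flat since $\cx{\F}\in\Ntot$; the map $\partial_0\colon F_0\to F_{-1}$ has image $\G$ and $\pi_0\colon F_0\to\M$ is zero on $\Cy[0]{\cx{\F}}=\ker\partial_0$, so it factors as $F_0\twoheadrightarrow\G\xrightarrow{\,\rho\,}\M$. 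I claim $\mapdef{\rho}{\G}{\M}$ is a Gorenstein flat precover. Given any Gorenstein flat $\G'=\Cy[0]{\cx{\F}'}$ with $\cx{\F}'\in\Ntot$ and any $\mapdef{h}{\G'}{\M}$, build a chain map $\cx{\F}'\to\Scx{0}{\M}$ whose degree-$0$ component is the composite $F'_0\twoheadrightarrow\Cy[-1]{\cx{\F}'}\xrightarrow{?}$ — again the shift bookkeeping must be pinned down so that a morphism $\G'\to\M$ extends to a chain map into $\Scx{0}{\M}$; since $\Scx{0}{\M}$ has zero differential, a chain map from $\cx{\F}'$ is precisely a map $F'_0/\Bo[0]{\cx{\F}'}\to\M$, and $F'_0/\Bo[0]{\cx{\F}'}=F'_0/\Cy[0]{\cx{\F}'}\cong\Bo[-1]{\cx{\F}'}=\Cy[-1]{\cx{\F}'}$, so with the indexing $\G':=\Cy[-1]{\cx{\F}'}$ such extensions exist and are unique. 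Then the cover property of $\pi$ lifts the resulting chain map through $\pi$, and reading off degree $0$ and passing to the relevant cycles yields a factorization $h=\rho\circ(\text{map }\G'\to\G)$. This proves $\rho$ is a precover.

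For the final clause, suppose $\Qcoh(X)$ has a Gorenstein flat generator; equivalently (by \cite[(1.2)]{AJL1} it suffices that $X$ be quasi-compact semi-separated, which gives a flat generator, and every flat sheaf is Gorenstein flat) a flat generator $\F$ exists, so by the last paragraph of the proof of Theorem~\ref{Ntot.complete} every $\Ntot$-cover is an epimorphism. Tracing through the construction above, $\pi\colon\cx{\F}\to\Scx{0}{\M}$ is then a degreewise epimorphism, in particular $\pi_{-1}\colon F_{-1}\to 0$ is trivially onto while what we need is that $\pi_0\colon F_0\to\M$ is onto; this holds because $\pi$ is an epimorphism in $\Ch{\Qcoh(X)}$ and the target is concentrated in degree $0$. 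Since $\pi_0$ factors through $\rho$, the precover $\rho\colon\G\to\M$ is an epimorphism.

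The main obstacle — and the part I expect to require the most care — is not any deep theorem but the homological-degree bookkeeping in the adjunction between the cycle functor and the stalk-at-degree-$0$ functor $\Scx{0}{-}$: one must fix once and for all which cycle object of an $\Ntot$-complex plays the role of "the" Gorenstein flat sheaf produced from a degree-$0$ target, verify that $\Hom_{\Ch{\Qcoh(X)}}(\cx{\F}',\Scx{0}{\M})$ is naturally identified with $\Hom_{\Qcoh(X)}(\Cy[-1]{\cx{\F}'},\M)$ for $\cx{\F}'\in\Ntot$ (using acyclicity to replace $\Bo$ by $\Cy$), and then check that the map $\G'\to\G$ produced by lifting through the cover is indeed a morphism of the cycle sheaves and not merely of some auxiliary quotient. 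Once this identification is in place, the precover property is a formal consequence of Theorem~\ref{Ntot.complete}, and the epimorphism statement follows from its second assertion.
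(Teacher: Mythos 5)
Your construction of the precover is, up to a harmless shift in degree, the paper's own argument: the paper takes an $\Ntot$-cover of $\Scx{1}{\M}$ and uses the natural identification $\Hom_{\Ch{\Qcoh(X)}}(\cx{\mathscr A},\Scx{1}{\M})\cong\Hom_{\Qcoh(X)}(\Cy[0]{\cx{\mathscr A}},\M)$ for acyclic $\cx{\mathscr A}$, where you cover $\Scx{0}{\M}$ and work with $\Cy[-1]$. The bookkeeping you flag does come out right: a chain map into the stalk complex is a map out of $F_0/\Bo[0]{\cx{\F}}$, acyclicity identifies this with a map out of $\Cy[-1]{\cx{\F}}$, and the identification is natural because the corestricted differential $F_0\to\Cy[-1]{\cx{\F}}$ is an epimorphism; hence the lift $\varkappa$ through the cover does induce a morphism of cycle sheaves with the required factorization. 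So the first assertion of the corollary is proved essentially as in the paper.

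The genuine gap is in the ``Moreover'' clause. The hypothesis is that $\Qcoh(X)$ has a \emph{Gorenstein flat} generator, and you replace it by the existence of a \emph{flat} generator, asserting the two are equivalent; they are not (a flat generator is trivially Gorenstein flat, but a Gorenstein flat generator need not be flat, nor does its existence give you a flat one). Consequently your appeal to the second clause of Theorem \ref{Ntot.complete}, which requires a flat generator so that the $\Ntot$-cover $\pi$ is an epimorphism, only establishes the epimorphism statement under a strictly stronger hypothesis than the one stated. The repair is direct and does not use the theorem's second clause at all: if $\G_0$ is a Gorenstein flat generator and $\phi\colon\G\to\M$ is the Gorenstein flat precover already constructed, then every morphism $\G_0\to\M$ factors through $\phi$ by the precover property, so the image of $\phi$ contains the sum of the images of all morphisms $\G_0\to\M$, and that sum is $\M$ because $\G_0$ is a generator; hence $\phi$ is an epimorphism. (Equivalently, an epimorphism $\G_0^{(I)}\to\M$ factors through $\phi$, using that $\Ntot$, and hence the class of Gorenstein flat sheaves, is closed under coproducts.)
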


\begin{proof}
  Let $\M\in \Qcoh(X)$; by Theorem \ref{Ntot.complete} there exists an
  \textbf{F}-totally acyclic cover $\varphi\colon\cx{\F}\to
  \Scx{1}{\M}$ in $\Ch{\Qcoh(X)}$. For every acyclic complex
  $\cx{\mathscr A}$ in $\Ch{\Qcoh(X)}$ there is an isomorphism
  \begin{equation*}
    \tag{$\ast$}
    \Hom_{\Ch{\Qcoh(X)}}(\cx{\mathscr A}, \Scx{1}{\M}) 
    \dis \Hom_{\Qcoh(X)}(\Cy[0]{\cx{\mathscr A}},\M)\:.
  \end{equation*}
  Thus, the cover induces a morphism $\phi\colon\Cy[0]{\cx{\F}}\to
  \M$, where $\Cy[0]{\cx{\F}}$ is Gorenstein flat.  We now argue that
  $\phi$ is a Gorenstein flat precover. Consider a morphism
  $\theta\colon\mathscr G\to \M$ in $\Qcoh(X)$ with $\mathscr G$
  Gorenstein flat. There exists an \textbf{F}-totally acyclic complex
  $\cx{\F}'$ of flat quasi-coherent sheaves with $\Cy[0]{\cx{\F}'} \is
  \mathscr G$. The morphism $\theta$ corresponds by $(\ast)$ to a
  morphism $\vartheta\colon \cx{\F}'\to \Scx{1}{\M}$ in
  $\Ch{\Qcoh(X)}$. Since $\varphi$ is an \textbf{F}-totally acyclic
  cover, there exists a morphism $\varkappa\colon\cx{\F}'\to \cx{\F}$
  such that $\varphi\circ\varkappa=\vartheta$ holds. It is now
  straightforward to verify that the induced morphism $\kappa\colon
  \Cy[0]{\cx{\F}'} \to \Cy[0]{\cx{\F}}$ satisfies $\phi\circ \kappa =
  \theta$. Thus $\phi$ is a Gorenstein flat precover of $\M$. Finally,
  a flat generator of $\Qcoh(X)$ is trivially Gorenstein flat, so if
  such a generator exists, then every Gorenstein flat precover is
  surjective.
\end{proof}

The next corollary partly recovers \cite[Proposition~8.10]{BGH} and
\cite[Corollary~2]{EIO}; the argument is quite different from the one
given in \cite{BGH} and \cite{EIO}.

\begin{corollary}
  \label{cor.Gor.prec}
  Let $R$ be a coherent and $d$-perfect ring. Every $R$-module has a
  Gorenstein projective precover.
\end{corollary}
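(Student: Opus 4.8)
The plan is to deduce this from Theorem~\ref{Ntot.complete} together with Theorems~\ref{totac.is.Ntotac} and~\ref{tot.acyclic.mod}, mimicking the proof of Corollary~\ref{Gor.flat.prec} but working over the affine scheme $X = \Spec R$. First I would observe that since $R$ is coherent and $d$-perfect, the scheme $X = \Spec R$ is (trivially) locally coherent and locally $d$-perfect, so Theorems~\ref{totac.is.Ntotac} and~\ref{tot.acyclic.mod} apply. Moreover $\Qcoh(X) \simeq R\text{-Mod}$, and under this equivalence $\Ch{\Qcoh(X)}$ is the category of chain complexes of $R$-modules, while $R$ itself is a flat (indeed projective) generator of $\Qcoh(X)$; thus the hypotheses of Theorem~\ref{Ntot.complete} are met and every $\Ntot$-cover is an epimorphism.

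Next I would identify the relevant class. A Gorenstein projective $R$-module is a cycle in a totally acyclic complex of projective $R$-modules; since $R$ is coherent $d$-perfect, Theorems~\ref{totac.is.Ntotac} and~\ref{tot.acyclic.mod} show that for a complex $\cx{P}$ of projective $R$-modules (= vector bundles on $\Spec R$), being totally acyclic is equivalent to being \textbf{F}-totally acyclic. Hence a module is Gorenstein projective if and only if it is a cycle in an \textbf{F}-totally acyclic complex of projective modules. I would then want to replace ``projective'' by ``flat'' here: over a $d$-perfect ring every flat module has finite projective dimension, and an \textbf{F}-totally acyclic complex of flat modules is a complex of flat modules whose cycles are ``flat with a complete resolution,'' which over a coherent $d$-perfect ring forces the cycles — and hence, by dimension shifting, a projective resolution can be spliced in — to be Gorenstein projective. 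Concretely: given an \textbf{F}-totally acyclic complex $\cx{\F}$ of flats and a cycle $G = \Cy[0]{\cx{\F}}$, one splices $\cx{\F}$ with projective resolutions of all its cycle modules (each of finite projective dimension $\le d$) and checks, via the usual Ext/tensor vanishing arguments together with \cite[Proposition~3.4]{holm04}, that the resulting complex of projectives is totally acyclic. This step is the main obstacle — reconciling the flat and projective versions of Gorenstein-ness over a coherent $d$-perfect ring — but it is exactly the content already packaged in Theorems~\ref{totac.is.Ntotac}--\ref{tot.acyclic.mod} applied to affine $X$, so it should go through cleanly.

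With this identification in hand, the argument is identical to the proof of Corollary~\ref{Gor.flat.prec}. Given an $R$-module $M$, Theorem~\ref{Ntot.complete} (applied to $X=\Spec R$) produces an \textbf{F}-totally acyclic cover $\varphi\colon\cx{\F}\to\Scx{1}{M}$ in $\Ch{R\text{-Mod}}$; the natural isomorphism $\Hom_{\Ch{}}(\cx{\mathscr A},\Scx{1}{M})\is\Hom_R(\Cy[0]{\cx{\mathscr A}},M)$ for acyclic $\cx{\mathscr A}$ then converts $\varphi$ into a map $\phi\colon\Cy[0]{\cx{\F}}\to M$ whose source is Gorenstein flat; but by the coherence plus $d$-perfectness we may arrange (splicing in projectives as above) that $\cx{\F}$ is in fact a totally acyclic complex of projectives, so $\Cy[0]{\cx{\F}}$ is Gorenstein projective. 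The precover property of $\phi$ follows from the cover property of $\varphi$ exactly as in Corollary~\ref{Gor.flat.prec}: any map $\mathscr G\to M$ with $\mathscr G$ Gorenstein projective arises from a totally acyclic complex of projectives $\cx{\F}'$ with $\Cy[0]{\cx{\F}'}\is\mathscr G$, lifts to a chain map $\cx{\F}'\to\Scx{1}{M}$, then factors through $\varphi$ by the lifting property of the cover, and passing to $0$-cycles gives the required factorization through $\phi$. Hence every $R$-module has a Gorenstein projective precover.
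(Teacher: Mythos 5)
Your overall frame (work over $X=\Spec R$, take the \textbf{F}-totally acyclic cover $\varphi\colon\cx{F}\to\Scx{1}{M}$ from Theorem~\ref{Ntot.complete}, pass to $0$-cycles via the adjunction, and use Theorem~\ref{totac.is.Ntotac} to see that a map from a totally acyclic complex of projectives lifts through $\varphi$) is exactly how the paper concludes. But the middle step --- passing from the cover by a complex of \emph{flat} modules to something whose $0$-cycles are Gorenstein \emph{projective} --- is where your argument has a genuine gap. You cannot ``arrange'' that the cover $\cx{F}$ is a complex of projectives: the cover produced by Theorem~\ref{Ntot.complete} is a cover with respect to the class of \textbf{F}-totally acyclic complexes of flat quasi-coherent sheaves, and it is whatever object it is. And your proposed repair, splicing in ``projective resolutions of all its cycle modules (each of finite projective dimension $\le d$),'' fails at the outset: the cycle modules of an \textbf{F}-totally acyclic complex of flats are Gorenstein flat, not flat, so $d$-perfectness gives no bound on their projective dimension (a Gorenstein flat module of finite projective dimension would in fact be flat, so this assumption essentially trivializes the situation). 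Nor is this step ``already packaged'' in Theorems~\ref{totac.is.Ntotac}--\ref{tot.acyclic.mod}: those results compare total and \textbf{F}-total acyclicity for complexes whose terms are already projective (vector bundles); they say nothing about replacing a complex of flats by a complex of projectives, which is precisely the nontrivial point here.

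The paper bridges this gap differently, at the level of complexes and with precovers rather than by identifying Gorenstein flat with Gorenstein projective modules. It uses the complete cotorsion pair $(\Ch{\ProjR},\Ch{\ProjR}^{\perp})$ from \cite{G08} to get, for the flat complex $F$, an exact sequence $0\to E\to P\to F\to 0$ with $P\in\Ch{\ProjR}$ and $E\in\Ch{\ProjR}^{\perp}\cap\Ch{\FlatR}$, and then invokes Neeman's theorem \cite[2.14]{Nee} that this intersection is the class of pure acyclic complexes of flat modules; hence $P$ is \textbf{F}-totally acyclic whenever $F$ is, and totally acyclic by Theorem~\ref{tot.acyclic.mod}. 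This shows every \textbf{F}-totally acyclic complex has a totally acyclic precover $\psi\colon P\to F$, and the composite $\varphi\circ\psi$ is then a totally acyclic precover of $\Scx{1}{M}$ (using Theorem~\ref{totac.is.Ntotac} for the lifting), after which one passes to cycles as in Corollary~\ref{Gor.flat.prec}. If you want to salvage your route, you would need to prove, not merely assert, that over a coherent $d$-perfect ring every Gorenstein flat module is Gorenstein projective --- a true but nontrivial statement in the spirit of \cite{EIO} and \cite{BGH}, whose proof requires machinery of the same kind as the cotorsion-pair argument above rather than the splicing sketch you give.
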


\begin{proof}
  Let $\ProjR$ denote the class of projective $R$-modules. In the
  category of complexes of $R$-modules,
  $(\Ch{\ProjR},\Ch{\ProjR}^{\perp})$ is a complete cotorsion pair;
  see Gillespie \cite[Section 5.2]{G08}. Let $F$ be a complex in
  $\Ch{\FlatR}$; there is an exact sequence
  \begin{equation*}
    \tag{$*$}
    0\lra E\lra P\lra F\lra 0
  \end{equation*} 
  with $P$ is in $\Ch{\ProjR}$ and $E$ in $\Ch{\ProjR}^\perp$. As the
  class of flat $R$-modules is resolving, $E$ belongs to the
  intersection $\Ch{\ProjR}^{\perp}\cap \Ch{\FlatR}$, which by Neeman
  \cite[2.14]{Nee} is the class of pure acyclic complexes of flat
  $R$-modules. Notice that $P$ is \textbf{F}-totally acyclic if and
  only if $F$ is \textbf{F}-totally acyclic. It thus follows from
  Theorem \ref{tot.acyclic.mod} that every \textbf{F}-totally acyclic
  complex has a totally acyclic precover.

  Let $M$ be any complex of $R$-modules. By Theorem
  \ref{Ntot.complete} it has an \textbf{F}-totally acyclic cover
  $\mapdef{\phi}{F}{M}$, and as noted above $F$ has a totally acyclic
  precover $\mapdef{\psi}{P}{F}$. We argue that the composite
  $\phi\circ\psi$ is a totally acyclic precover of $M$. To this end,
  let $\delta\colon P'\to M$ be a morphism, where $P'$ is a totally
  acyclic complex; by Theorem \ref{totac.is.Ntotac} it is
  \textbf{F}-totally acyclic. Since $\phi$ is an \textbf{F}-totally
  acyclic cover of $M$, there exists a morphism $\theta\colon P'\to F$
  with $\phi\circ \theta = \delta$. Now, since $\psi$ is a totally
  acyclic precover of $F$, there exists $\g\colon P'\to P$ such that
  $\psi\circ \g = \theta$. Finally, one has $(\phi\circ\psi)\circ \g =
  \phi\circ\theta = \delta$. So $\phi\circ\psi$ is a totally acyclic
  precover of $M$. Finally, arguing as in the proof of Corollary
  \ref{Gor.flat.prec}, we infer that every module has a Gorenstein
  projective precover.
\end{proof}

\section*{Acknowledgments}

\noindent
We thank the anonymous referees for numerous comments and suggestions that
helped us clarify the exposition. We are particularly grateful to one referee
for a correspondence that led to Proposition \ref{prp:equiv_qcss}.

\end{document}